\numberwithin{equation}{section}
\newtheorem{theorem}{Theorem}[section]
\newtheorem{corollary}[theorem]{Corollary}
\newtheorem{lemma}[theorem]{Lemma}
\theoremstyle{definition}
\newtheorem{defn}[theorem]{Definition}
\def \mc{\mathcal}
\def \({\left(}
\def \){\right)}
\def \<{\langle}
\def \>{\rangle}
\def \deg{\mathrm{deg}}
\begin{document}

\title[On stability of toric tangent bundles]{On stability of tangent bundle of toric varieties}

\author[I. Biswas]{Indranil Biswas}
\address{School of Mathematics, Tata Institute of Fundamental Research, Mumbai 400005, India}

\email{indranil@math.tifr.res.in}

\author[A. Dey]{Arijit Dey}

\address{Department of Mathematics, Indian Institute of Technology-Madras, Chennai, India}

\email{arijitdey@gmail.com}

\author[O. Genc]{Ozhan Genc}

\address{Middle East Technical University, Northern Cyprus Campus, Guzelyurt, Mersin 10, Turkey.
\textit{Current Address: Department of Mathematics and Informatics, Jagiellonian University, {\L}ojasiewicza 6, 30-348 Krak{\'o}w, Poland}}

\email{ozhangenc@gmail.com}

\author[M. Poddar]{Mainak Poddar}

\address{Mathematics Department, Indian Institute of Science Education and Research, Pune, India}

\email{mainakp@gmail.com}

\subjclass[2010]{14J60, 32L05, 14M25}

\keywords{ Semistable sheaf, tangent bundle, toric variety, Hirzebruch surface, Fano manifold.}

\begin{abstract} 
Let $X$ be a nonsingular complex projective toric variety. We address the question of semi-stability as well 
as stability for the tangent bundle $T{X}$. In particular, a complete answer is given
when $X$ is a Fano toric variety of dimension four with Picard number at most two,
complementing earlier work of Nakagawa. 
We also give an infinite set of examples of Fano toric varieties  for which
$TX$ is unstable; the dimensions of this collection of varieties are unbounded. Our 
method is based on the equivariant approach initiated by Klyachko and developed 
further by Perling and Kool. 
\end{abstract}

\maketitle

\section{Introduction}

Let $X$ be a smooth complex projective variety. If the canonical line bundle $K_{X}$ is ample, then
from a theorem of Yau, \cite{Ya}, and Aubin, \cite{Au}, it follows that the tangent bundle 
$T{X}$ is semistable (in the sense of Mumford and Takemoto) with respect to the polarization 
$K_X$. The variety $X$ is said to be Fano if the anti-canonical line bundle $K_{X}^{-1}$ is 
ample. Fano varieties are very basic objects in birational classification of complex algebraic 
varieties (minimal model program), for example a theorem of Birkar-Cascini-Hacon-McKernan \cite{BCCM} says that 
every uniruled variety is birational to a variety which has a fibration with a Fano general 
fiber. Stability of the tangent bundle of a nonsingular Fano variety with respect to polarization 
$K_{X}^{-1}$ is a question of interest originated mostly from a differential geometric point of 
view. The existence of Einstein-K$\ddot{\text{a}}$hler  metric on $X$ implies the polystability of the 
tangent bundle with respect to $K_X^{-1}$ \cite{Kob, Lub}. In general converse of this result is not true. The simplest example is the surface $\Sigma_2$ obtained by blowing up the complex plane 
$\mathbb{P}^2$ at two points. In this paper we are interested in studying semi-stability as well 
as stability of the tangent bundle $TX$ when $X$ is a toric variety and in particular when $X$ is a 
Fano toric variety.

Let $X$ be a nonsingular complex projective toric variety of dimension $n$, equipped with
an action of the
$n$--dimensional complex torus $T$. A coherent torsion-free sheaf $\mc{E}$ on $X$ is said to be 
$T$--equivariant (or $T$--linearized) if it admits a lift of the $T$--action on $X$, which is 
linear on the stalks of $\mc{E}$. Fix a polarization $H$ of $X$, where $H$ is a $T$--equivariant 
very ample line bundle (equivalently, $T$--invariant very ample divisor) of $X$.

A $T$--equivariant coherent torsion-free sheaf $\mc{E}$ on $X$ is said to be equivariantly 
stable (respectively, equivariantly semistable) if $\mu(\mc{F}) \,<\, \mu(\mc{E})$ (respectively, 
$\mu(\mc{F})\, \le\, \mu(\mc{E})$) for every proper $T$--equivariant proper subsheaf $\mc{F}\,\subset 
\,\mc{E}$ (see Section 2). From the uniqueness of the 
Harder-Narasimhan filtration it follows easily that the notions of semi-stability and equivariant 
semi-stability of an equivariant torsion-free sheaf on a nonsingular toric variety are 
equivalent. Further, in case of equivariant torsion-free sheaves, the notions of
equivariant stability and stability coincide (see Theorem \ref{stability} or \cite[Proposition 4.13]{Kool2}). Using this equivariant 
approach, we investigate the stability and semi-stability of the tangent bundle of a nonsingular 
toric variety.

Our main results are as follows.

\begin{enumerate}
\item Determination of the stability (or otherwise) of the tangent bundle of Hirzebruch surfaces 
for an arbitrary polarization; see Theorem \ref{HS} and Corollary \ref{HS2}.

\item A very simple proof of the well-known result that 
$T\mathbb{P}^n$ is stable with respect to the anti-canonical
polarization (Theorem \ref{projective spaces}).

\item We identify all nonsingular Fano toric $4$-folds  with Picard number at most two that have semi-stable tangent bundle (Theorem \ref{fano4}). In particular,  we get an example of a Fano toric $4$-fold (namely, $B_5$) which has a strictly semi-stable tangent bundle, but does not admit Einstein-K$\ddot{\text{a}}$hler metric (cf. \cite{Nak2}).  
 
\item Construction of an infinite family of Fano toric varieties with unstable tangent bundle, 
consisting of $\mathbb{P}(\mc{O}_{\mathbb{P}^n} \oplus \mc{O}_{\mathbb{P}^n} (m))$ for all $n\ge 
2$ and $m \le n$ (Theorem \ref{family}). The case $n=2$ was settled earlier by
Steffens in \cite{Stef}.
\end{enumerate}

The general strategy here is as follows. We use the isotypical decomposition of an equivariant 
sheaf to describe it in terms of certain combinatorial data, following Perling \cite{Per} and 
Kool \cite{Kool1}. (Of course, both draw inspiration from the seminal work of Klyachko 
\cite{Kly}.) 
 We prove a formula in Lemma \ref{rank} that calculates the rank of an equivariant torsion--free coherent 
sheaf on $X$ from the combinatorial data. With a little bit of fine-tuning, this specializes to a very useful 
rank formula, see \eqref{rank2}, for an arbitrary equivariant coherent subsheaf of the tangent bundle. We obtain a similar type of formula for the 
degree of such a subsheaf, see \eqref{degf}, using a formula of Kool for the first Chern class. 
Using these formulas, we can identify the combinatorial data that may be associated to a subsheaf 
of a given rank whose slope exceeds that of the tangent bundle; see Lemma \ref{collection} and 
Lemma \ref{matrix}. We then examine if a subsheaf with the given rank and corresponding to such 
combinatorial data really exists by studying the transition maps associated to the combinatorial 
data.

\section{From equivariant stability to stability}

Given a coherent torsion-free sheaf $\mc{E}$ on a projective variety $X$ of dimension $n$, the slope $\mu(\mc{E})$ with respect to a polarization $H$ on $X$ is defined as the ratio 
$$ \mu(\mc{E}) = \frac{ \deg\,{\mc{E}} }{ {\rm rank}\,{\mc{E}}} \,,$$ where the degree of $\mc{E}$ is defined as the intersection product
$\deg\,{\mc{E}}\,:=\, c_1(\mc{E})\cdot H^{n-1}$. A subsheaf $\mc F$ of $\mc E$ is said to be a proper subsheaf if $0 \,< \,\text{rank}(\mc F) \,<\,\text{rank}(\mc E)$. A torsion-free sheaf $\mc{E}$ is said to be $\mu$--stable (respectively, $\mu$--semistable) if 
 $ \mu(\mc{F}) < \mu({\mc{E}}) $ (respectively, $ \mu(\mc{F}) \le \mu({\mc{E}}) $) for every proper subsheaf $\mathcal{\mc F} \,\subset\, \mc E$.
 The notion of $\mu$--stability (semistability) was first introduced by Mumford and Takemoto. In this article,
 (semi)stability we will always mean $\mu$--(semi)stability, unless otherwise specified. Also, a sheaf $\mc{E}$ will be called unstable if $\mc{E}$ is not semistable.
 
Stable and semistable sheaves play an important role in the structure theory of coherent sheaves 
(cf. \cite{HL}). Every torsion-free coherent sheaf admits the Harder-Narasimhan filtration such 
that each successive quotient is semistable. A semistable sheaf, in turn, admits a
Jordan-Holder filtration such that each successive quotient is stable of same slope.

In this section we give a proof of the crucial fact that for an equivariant torsion-free sheaf on 
a nonsingular toric variety, equivariant stability is equivalent to usual stability. This 
result is also proved by Kool \cite[Proposition 4.13]{Kool2} for reflexive sheaves. The proof 
given here is different.

\begin{theorem}\label{stability}
Let $\mathcal{E}$ be an equivariant torsion-free sheaf on a projective toric variety $X$. Then 
$\mathcal{E}$ is equivariantly stable if and only if 
$\mathcal{E}$ is stable. 
\end{theorem}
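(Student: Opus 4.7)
The plan is to combine the uniqueness of the Harder--Narasimhan filtration with the Borel fixed-point theorem applied to an appropriate Quot scheme. The direction ``stable $\Rightarrow$ equivariantly stable'' is immediate, since any proper $T$-equivariant subsheaf is, in particular, a proper subsheaf.

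For the converse, suppose $\mc{E}$ is equivariantly stable. I would first show that $\mc{E}$ is semistable: the Harder--Narasimhan filtration of $\mc{E}$ is canonical, so the equivariant structural isomorphisms $\sigma_t:\mc{E}\to t^{*}\mc{E}$ carry it to itself, and each of its terms is therefore $T$-equivariant. If the filtration were nontrivial, its maximal destabilizing piece would be a proper equivariant subsheaf of slope strictly larger than $\mu(\mc{E})$, contradicting equivariant stability.

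Now assume for contradiction that $\mc{E}$ is semistable but not stable, so that there exists a proper subsheaf $\mc{F}\subset \mc{E}$ with $\mu(\mc{F})=\mu(\mc{E})$. The aim is to produce a proper $T$-equivariant subsheaf of the same slope, which would contradict the strict inequality in the definition of equivariant stability. To this end, let $P$ denote the Hilbert polynomial of $\mc{E}/\mc{F}$ with respect to the polarization $H$, and let $Q:=\mathrm{Quot}_{X}(\mc{E},P)$ be Grothendieck's Quot scheme; it is projective and contains the point $[\mc{E}\twoheadrightarrow\mc{E}/\mc{F}]$, hence is non-empty. The equivariant structure on $\mc{E}$ together with the $T$-action on $X$ induces an algebraic $T$-action on $Q$. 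Applying the Borel fixed-point theorem to the connected solvable group $T$ acting on the non-empty projective variety $Q$ yields a $T$-fixed point corresponding to a $T$-equivariant quotient $\mc{E}\twoheadrightarrow\mc{Q}'$. Its kernel $\mc{F}'\subset \mc{E}$ is $T$-equivariant, torsion-free, and shares the Hilbert polynomial of $\mc{F}$; consequently $\mathrm{rank}(\mc{F}')=\mathrm{rank}(\mc{F})$ and $\deg(\mc{F}')=\deg(\mc{F})$, so $\mu(\mc{F}')=\mu(\mc{E})$, and $\mc{F}'$ is a proper subsheaf. This is the required contradiction.

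The main conceptual obstacle is the semistable-but-not-stable case, in which no canonical proper subsheaf of maximal slope need exist: if $\mc{E}$ is polystable the socle equals $\mc{E}$, and Jordan--H\"older filtrations are in general not unique. The Quot-plus-Borel maneuver sidesteps the need for canonicity by exploiting only the existence of a $T$-fixed point on the Quot parameter space; given access to the projectivity of $\mathrm{Quot}$ and the Borel theorem, the remaining verifications (that the Hilbert polynomial determines both rank and degree, that the kernel of an equivariant quotient inherits equivariance, and that $\mc{F}'$ stays a proper torsion-free subsheaf) are routine.
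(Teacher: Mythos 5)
Your argument is correct, and it reaches the hard part of the theorem by a genuinely different route than the paper. The paper first reduces to the reflexive case, gets semistability from the uniqueness of the maximal destabilizing subsheaf (as you do), but then handles the ``semistable but not stable'' case intrinsically: it passes to the socle to conclude polystability, and then shows indecomposability by producing a maximal torus $\widetilde{T}\subset\Aut(\mc E)$ on which $T$ acts trivially (via a maximal torus of $\Aut(\mc E)\rtimes T$ containing $T$), observing that the resulting $\widetilde{T}$--eigenspace decomposition of $\mc E$ is $T$--equivariant and invoking Atiyah's criterion that $\dim\widetilde{T}=1$ forces indecomposability. You instead take a proper subsheaf $\mc F$ of slope $\mu(\mc E)$, pass to the projective scheme $\mathrm{Quot}_X(\mc E,P)$ with $P$ the Hilbert polynomial of $\mc E/\mc F$, and use the Borel fixed-point theorem for the torus to produce a $T$--invariant kernel $\mc F'$ with the same Hilbert polynomial, hence (on the smooth $X$ of interest, where the Hilbert polynomial determines rank and $c_1\cdot H^{n-1}$) the same rank and degree, contradicting equivariant stability. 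Your route buys uniformity and brevity: the same Quot-plus-Borel step would also yield semistability directly, so your Harder--Narasimhan paragraph is actually redundant, and you avoid the socle, the polystability step, the automorphism-group/Atiyah machinery, and the reduction to reflexive sheaves. The cost is heavier input (projectivity of Quot, Borel's theorem, and the Riemann--Roch identification of rank and degree from the Hilbert polynomial), plus two small points you should acknowledge: Borel's theorem should be applied to the reduced scheme, or to a $T$--invariant irreducible component (automatic since $T$ is connected), and the passage from a closed-point-fixed quotient to an honest equivariant structure on the kernel is the same standard ``invariant $\Rightarrow$ equivariant'' step the paper itself uses for the Harder--Narasimhan term; at that level of rigor the two proofs are comparable.
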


\begin{proof} 
	If $\mathcal{E}$ is stable, then it is evidently equivariantly stable.
	We will prove that $\mathcal{E}$ is stable if it is equivariantly stable.
	
	We first note that it is enough to prove this under the extra assumption that $\mathcal{E}$ is reflexive. Indeed,
	if $\mathcal{E}$ is torsion-free and equivariantly stable, then $\mathcal{E}^{\vee\vee}$ is reflexive and
	equivariantly stable. On the other hand, if $\mathcal{E}^{\vee\vee}$ is stable, then clearly
	$\mathcal{E}$ is also stable. Indeed, for any coherent subsheaf $\mc{F}\, \subset\, \mathcal{E}^{\vee\vee}$, we have
	$\deg\, \mc{F}\,=\, \deg (\mc{F}\cap \mathcal{E})$.
	
	So assume that $\mathcal{E}$ is reflexive and equivariantly stable. We will first show that 
	$\mathcal{E}$ is semistable.
	
	To prove semistability by contradiction, assume that $\mathcal{E}$ is not semistable. Then
	there is a unique maximal destabilizing semistable subsheaf
	\begin{equation}\label{j2}
	\mathcal{F}\, \subset\,  \mathcal{E}\, .
	\end{equation}
	In other words, $\mathcal{F}$ is the smallest nonzero subsheaf of $\mathcal{E}$ in the
	Harder-Narasimhan filtration of $\mathcal{E}$.
	
	As before, $T\, \subset\, \text{Aut}(X)$ is the torus acting on $X$.
	Let $$\Phi\,:\, T\times \mathcal{E}\, \longrightarrow\, \mathcal{E}$$ be an action of the torus $T$ on
	$\mathcal{E}$ lifting the action of $T$ on $X$. For any element $t\, \in\, T$, the homomorphism
	$$\mathcal{E}\, \longrightarrow\, \mathcal{E}\, , \ \ v\, \longmapsto\, \Phi(t,\, v)$$
	will be denoted by $\Phi_t$. Note that $\Phi_t$ is an automorphism of $\mathcal{E}$ over the automorphism
	of $X$ given by the action of $t$ on it. For any $t\, \in\, T$, and any coherent subsheaf
	$\mc{V}\, \subset\, \mathcal{E}$, the coherent subsheaf $\Phi_t(\mc{V})\, \subset\, \mathcal{E}$ will
	be denoted by $t\cdot \mc{V}$. The above automorphism $\Phi_t$ produces an isomorphism
	\begin{equation}\label{j1}
	(t^{-1})^*\mc{V}\, \stackrel{\sim}{\longrightarrow}\,  t\cdot \mc{V}
	\end{equation}
	over the identity map of $X$. 
	
	Since $\mu(t^{\ast}\mc{V})\,=\, \mu(\mc{V})$ for any coherent sheaf $\mc{V}$ on $X$, from \eqref{j1}
	it follows that $\mu(t\cdot \mc{V})\,=\, \mu(\mc{V})$ for every coherent subsheaf $\mc{V}\, \subset\, \mathcal{E}$.
	This and \eqref{j1} together imply that the subsheaf $\mc{F}$ in \eqref{j2} has the property that
	$t\cdot \mc{F}$ is also a maximal destabilizing subsheaf of $\mc{E}$. Therefore, from the
	uniqueness of the maximal destabilizing subsheaf it is deduced that
	$$t\cdot \mc{F}\,=\, \mc{F}\, \subset\, \mc{E}\, .$$
	Consequently, the action of $T$ on $\mathcal{E}$ preserves the subsheaf $\mc{F}$.
	Therefore, $\mc{F}$ is equivariant. This contradicts the given condition that $\mc{E}$ is equivariantly semistable.
	Hence we conclude that $\mc{E}$ is semistable.
	
	Let $\mc{H}$ be the socle of $\mc{E}$; in other words, $\mc{H}$ is the maximal polystable 
	subsheaf of $\mc{E}$ with the same slope as $\mc{E}$ \cite[Proposition 3.1]{BDL}, \cite[p. 23, Lemma 1.5.5]{HL}. From
	the uniqueness of $\mc{H}$, it follows that $t\cdot\mc{H} \,=\, \mc{H}$ 
	for every $t \,\in\, T$. Therefore, $\mc H$ is a $T$--equivariant subsheaf of
	$\mc{E}$. Since $\mc{E}$ is equivariantly stable, and the slopes of $\mc H$ and $\mc{E}$
	coincide, we must have $\mc{H}\,=\, \mc{E}$. This implies that $\mc{E}$ is polystable. 
	
	Since $\mc{E}$ is polystable, it suffices to show that $\mc{E}$ is indecomposable. Note that
	$\mc{E}$ is indecomposable if the dimension of a maximal torus in the algebraic group
	$\text{Aut}(\mc{E})$ is one \cite[p.~201, Proposition 16]{At2}; the automorphism
	group $\text{Aut}(\mc{E})$ is a Zariski open subset of the affine space $H^0(X,\, \text{End}(\mc{E}))$.
	
	The action of $T$ on $\mc{E}$ produces an action of $T$ on the group $\text{Aut}(\mc{E})$:
	$$
	(t\cdot A)(v) \,=\, t\cdot A(t^{-1}\cdot v)\, , \ \ \forall \ \ A\, \in\, \text{Aut}(\mc{E})\, , \ v\, \in\, \mc{E}\, ,
	$$
	for every $t\, \in\, T$.
	We will show that there is a maximal torus $\widetilde{T}\, \subset\, \text{Aut}(\mc{E})$ on which
	$T$ acts trivially. For this, first consider the semi-direct product
	$\text{Aut}(\mc{E})\rtimes T$ for this action of $T$ on $\text{Aut}(\mc{E})$. Let
	$\widetilde{T}'\, \subset\, \text{Aut}(\mc{E})\rtimes T$ be a maximal torus containing
	the subgroup $T$ of $\text{Aut}(\mc{E})\rtimes T$.
	Then
	$$
	\widetilde{T}\, :=\, \widetilde{T}'\cap \text{Aut}(\mc{E})\, \subset\, \text{Aut}(\mc{E})
	$$
	is a maximal torus of $\text{Aut}(\mc{E})$ on which $T$ acts trivially. Now
	$\widetilde{T}$ produces an eigenspace decomposition of $\mc{E}$ for the characters of $\widetilde{T}$
	$$
	\mc{E}\,=\, \bigoplus_{\chi\in \widetilde{T}^*} \mc{E}_\chi\, ;
	$$
	any $t\, \in\, \widetilde{T}$ acts on $\mc{E}_\chi$ as multiplication by $\chi(t)$.
	The direct summands in this decomposition
	are preserved by the action of $T$ on $\mc{E}$, because $T$ acts trivially on $\widetilde{T}$
	(see \cite[p.~55, Proposition~1.2]{BP} for a general result). But $\mc{E}$ is equivariantly stable, so it does
	not admit any nontrivial $T$--equivariant decomposition. This implies that $\dim \widetilde{T}\,=\, 1$, because
	that action of $\widetilde{T}$ on $\mc{E}$ is faithful. As noted before, this implies that $\mc{E}$ is indecomposable.
\end{proof}

\section{Equivariant coherent sheaves on $X$} 

We briefly review the classification of equivariant coherent sheaves on a nonsingular toric variety following Perling \cite{Per}. The notation established in this section will be used extensively
in the rest of the paper.

Let $X$ be a nonsingular complex projective toric variety of dimension $n$, equipped with the action of an $n$--dimensional torus $T$. Let $M$ and 
$N$ denote the group of characters of $T$ and the group of one--parameter subgroups of $T$ respectively. Then both $M$ and $N$ are free $\mathbb 
Z$--modules of rank $n$ that are naturally dual to one another. Let $\Delta$ denote the fan of $X$. It is a collection of rational cones in the real 
vector space $N \otimes_{\mathbb Z} \mathbb{R}$ closed under the operations of taking faces, and performing intersections. Denote the set of 
$d$-dimensional cones of the fan $\Delta$ of $X$ by $\Delta(d)$. For any 
one--dimensional cone (ray) $\alpha \,\in\, \Delta(1)$, its primitive co-character generator is also denoted by $\alpha$; this is for notational 
convenience. We refer the reader to \cite{Ful,Oda} for details on toric varieties.

Let $\mc{E}$ be a $T$--equivariant coherent sheaf over $X$ of rank $r$. 
Let $X_{\sigma}$ be any affine toric subvariety of $X$ corresponding to a cone $\sigma$. Denote by $S_{\sigma}$ the semigroup
$$\{m \,\in\, M \, \mid\, \langle m, \,\alpha \rangle \,\ge\, 0 \ \ \forall \ \alpha \,\in\, \sigma \} \,\subseteq\, M\, .$$ Let $k[S_{\sigma}]$ be the finitely generated semigroup algebra which is the coordinate ring of 
$X_{\sigma}$. Let $E^{\sigma}$ denote the 
$k[S_{\sigma}]$--module $\Gamma(X_{\sigma},\, \mc{E})$ consisting of sections of $\mc{E}$ over $X_{\sigma}$. Consider the isotypical decomposition
\begin{equation}\label{isotyp}
E^{\sigma} \,=\, \bigoplus_{m \in M} E^{\sigma}_m \,.
\end{equation}
For any $m' \,\in\, M $ with $\chi(m')\,\in\, k[S_{\sigma}]$, there is a natural multiplication
map
\begin{equation}\label{chim}
\chi(m')\,:\, E^{\sigma}_m \,\longrightarrow\, E^{\sigma}_{m+m'} \,.
\end{equation}
This homomorphism is injective if $\mc{E}$ is torsion-free.

Let \begin{equation}\label{ssperp}
S_{\sigma^{\perp}} \,=\, \{m \,\in\, M\,\mid\, \langle m,\, \alpha \rangle \,=\, 0 \ \ \forall \ \alpha \,\in\, \sigma \} \, . \end{equation}
Define 
\begin{equation}
M_{\sigma} \,= \,M/S_{\sigma^{\perp}} \,.
\end{equation} Denote by $[m]$ the equivalence class 
of $m$ in $M_{\sigma}$. Note that $M_{\sigma}$ may be identified with the character group of an appropriate subtorus $T_{\sigma}$ of $T$,
namely the maximal subtorus of $T$ that has a fixed point in $X_{\sigma}$.
Let $A_{\sigma}$ be the subvariety of $X_{\sigma}$ defined by the ideal generated by the set $\{ \chi(m) -1 \,\mid\, m \,\in\, S_{\sigma^{\perp}}\}$. Then 
$A_{\sigma}$ has a dense $T_{\sigma}$--orbit. So elements of $M_{\sigma}$ generate the field of rational functions on $A_{\sigma}$. 

If $m' \in S_{\sigma^{\perp}}$,
then $\chi(m')$ in \eqref{chim} is an isomorphism.
 Denote the isomorphism class of $E^{\sigma}_{m'}$, for $m' \in m + S_{\sigma^{\perp}} $, by $E^{\sigma}_{[m]}$.
 The space $E^{\sigma}_{[m]}$ may be identified with the space of sections of the $T_{\sigma}$--equivariant bundle 
 $\mathcal E|_{A_{\sigma}}$ of weight $[m] \in M_{\sigma}$.
 We have, in fact, an isotypical decomposition 
 \begin{equation}\label{isod} \Gamma(A_{\sigma}, \mathcal{E}) = \bigoplus_{[m] \in M_{\sigma}} E^{\sigma}_{[m]} \,. \end{equation}

Moreover, for $[m] \,\in \,M_{\sigma}$ and any $m' \,\in\, M $ such that $\chi(m')\,\in\, k[S_{\sigma}]$, the map $\chi(m')$ in \eqref{chim} induces a
 map 
 \begin{equation}\label{chim2}
\chi^{\sigma}([m']): E^{\sigma}_{[m]} \longrightarrow E^{\sigma}_{[m+m']} \,.
\end{equation}
 Here, we may naturally identify $\chi^{\sigma}([m'])$ with a character of $T_{\sigma}$. 
 
\begin{defn}\label{order} Define an equivalence relation $\leq_{\sigma}$ on $M$ by setting $m \leq_{\sigma} m'$ if and only if $m'-m \in S_{\sigma}$. This yields a directed pre-order on $M$ which is a partial order when $\sigma$ is of top dimension.

If $m \,\leq_{\sigma}\, m'$, but $m' \,\leq_{\sigma}\, m$ does not hold, we say that $m \,<_{\sigma}\, m'$. \end{defn}

\begin{defn} Let $\{E^{\sigma}_m \,\mid\, m \,\in\, M\}$ be a family of 
 $k$--vector spaces. For each relation $m \,\leq_{\sigma}\, m'$, let
there be given a $k$--linear map $$\chi^{\sigma}(m,m') \,:\, E^{\sigma}_m \,
\longrightarrow\, E^{\sigma}_{m'}$$ such that $ \chi^{\sigma}(m,m)\,=\,1$ for
all $m \,\in\, M$, and also
$$\chi^{\sigma}(m,m'') \,=\,
\chi^{\sigma}(m',m'') \circ \chi^{\sigma}(m,m')$$ for all triples $m
\,\leq_{\sigma}\, m' \, \leq_{\sigma} \,m''$. 
We refer to the $\chi^{\sigma}(m,m'') $'s as multiplication maps.
Denote such data by 
$\widehat{E}^{\sigma}$ and call it a  $\sigma$-family. A
morphism ${\phi}^{\sigma}\,:\, \widehat{E}^{\sigma} \,\longrightarrow \,
\widehat{E}'^{\sigma} $ of 
$\sigma$--families is given by a collection of linear maps
$\{ \phi_m^{\sigma} \,:\, E^{\sigma}_m \,\longrightarrow\, E'^{\sigma}_m\,
\mid\, m \,\in\, M \}$ respecting the multiplication maps.
\end{defn}

\begin{defn}
	A  $\sigma$-family  $\widehat{E}^{\sigma}$ is called  finite if 
	\begin{enumerate}
		\item all the $k$--vector spaces $E^{\sigma}_m$ are finite dimensional,
		\item  for each chain $ \ldots   <_{\sigma} m_{i-1} <_{\sigma} m_i  <_{\sigma} \ldots  $ of elements of $M$,  there exists an $i_0 \in \mathbb{Z}$ such that $E^{\sigma}_{m_i} = 0$ for all $i < i_0$, and
		\item there are only finitely many vector spaces $E^{\sigma}_m $ such that the map
		$$ \bigoplus_{m' <_{\sigma} m } E^{\sigma}_{m'} \longrightarrow E^{\sigma}_m $$
 defined by the summation of $\chi^{\sigma} (m', m)$'s is not surjective.
	\end{enumerate} 
	 
\end{defn}
\begin{theorem}[{\cite{Per}}]
The category of $T$--equivariant coherent sheaves on $X_{\sigma}$ is equivalent to the category of finite  $\sigma$--families $\{ \widehat{E}^{\sigma} \}$.
\end{theorem}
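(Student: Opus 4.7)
The plan is to exhibit explicit functors in both directions and verify that they are mutually quasi-inverse. Everything reduces to the standard equivalence between quasi-coherent sheaves on the affine variety $X_\sigma = \mathrm{Spec}\, k[S_\sigma]$ and $k[S_\sigma]$-modules, combined with the fact that a $T$-equivariant structure on such a sheaf is the same as an $M$-grading on the corresponding module.

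First I would define the functor from equivariant coherent sheaves to $\sigma$-families by the construction already set up in the preceding discussion. Given $\mathcal{E}$, form $E^\sigma := \Gamma(X_\sigma, \mathcal{E})$, which is an $M$-graded $k[S_\sigma]$-module via \eqref{isotyp}; the components $E^\sigma_m$ are finite dimensional because $\mathcal{E}$ is coherent and the character decomposition of the $T$-action is compatible with the finite generation. For $m \leq_\sigma m'$, the map $\chi^\sigma(m, m')$ is simply multiplication by $\chi(m'-m) \in k[S_\sigma]$ as in \eqref{chim}; the cocycle identity $\chi^\sigma(m',m'') \circ \chi^\sigma(m,m') = \chi^\sigma(m,m'')$ and the normalization $\chi^\sigma(m,m) = 1$ follow immediately from the associativity and unitality of the module action. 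On morphisms, an equivariant sheaf map induces a graded module map which is automatically compatible with all the $\chi^\sigma(m,m')$'s.

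In the other direction, given a $\sigma$-family $\widehat{E}^\sigma$ I would form the $M$-graded $k$-vector space $E^\sigma := \bigoplus_{m\in M} E^\sigma_m$ and equip it with a $k[S_\sigma]$-module structure by declaring that for $m' \in S_\sigma$ and $v \in E^\sigma_m$,
$$ \chi(m') \cdot v \,:=\, \chi^\sigma(m, m+m')(v)\, . $$
The cocycle condition for the $\sigma$-family is precisely the associativity $\chi(m'') \cdot (\chi(m') \cdot v) = \chi(m'+m'') \cdot v$, and $\chi^\sigma(m,m) = 1$ gives the unit axiom; thus $E^\sigma$ really is an $M$-graded $k[S_\sigma]$-module. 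I then take the associated quasi-coherent sheaf $\widetilde{E^\sigma}$ on $X_\sigma$, whose $M$-grading recovers the $T$-equivariant structure. Morphisms of $\sigma$-families correspond to graded $k[S_\sigma]$-module homomorphisms, hence to equivariant sheaf morphisms.

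The two compositions are naturally isomorphic to the identity functors: on the sheaf side this is the standard affine equivalence $\mathcal{E} \simeq \widetilde{\Gamma(X_\sigma, \mathcal{E})}$, while on the $\sigma$-family side the reconstruction of the $M$-graded module from its components and multiplication-by-$\chi(m')$ maps is tautological. The main obstacle I anticipate is the coherence/finiteness bookkeeping: the bare definition of a $\sigma$-family only requires each $E^\sigma_m$ to be finite dimensional, whereas for the associated sheaf to be coherent the module $\bigoplus_m E^\sigma_m$ must be finitely generated over $k[S_\sigma]$. I would handle this by showing that this finite generation is encoded in an implicit stabilization property of the transition maps: starting from a finite set of generators sitting in finitely many weight spaces, the $\chi^\sigma(m,m')$ for $m' \in m + S_\sigma$ must exhaust all other components (and vanish outside the $S_\sigma$-shifts of the generators). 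Once this compatibility between finite generation on the module side and the corresponding finiteness on $\sigma$-families is checked, the functors restrict to an equivalence on the coherent/finite-dimensional subcategories, completing the proof.
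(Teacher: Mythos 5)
The paper itself gives no proof of this statement; it is quoted from Perling \cite{Per}, and your overall route (affine equivalence between quasi-coherent sheaves on $X_\sigma=\mathrm{Spec}\,k[S_\sigma]$ and $k[S_\sigma]$-modules, plus ``equivariant structure $=$ $M$-grading'', plus reading a $\sigma$-family as the componentwise description of a graded module) is exactly the standard one that Perling follows. The two directions of the functor and the natural isomorphisms are fine as you set them up, and quoting the comodule-equals-grading fact is acceptable, though you should be aware that this is where the real content of Perling's argument sits.

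The genuine gap is in the last step, and you half-see it yourself: the coherence/finiteness bookkeeping. As the $\sigma$-family is defined here (and as you use it), the only finiteness imposed is that each weight space $E^{\sigma}_m$ is finite dimensional, and this does \emph{not} force the module $\bigoplus_{m\in M}E^{\sigma}_m$ to be finitely generated over $k[S_\sigma]$; your proposed fix, that finite generation is ``encoded in an implicit stabilization property of the transition maps,'' is not true. Concretely, let $\sigma$ be a top-dimensional cone, put $E^{\sigma}_m=k$ for every $m\in M$, and let every map $\chi^{\sigma}(m,m')$ with $m<_{\sigma}m'$ be zero (and $\chi^{\sigma}(m,m)=1$). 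All axioms of a finite dimensional $\sigma$-family hold, but the resulting module is an infinite direct sum of copies of the residue field at the torus-fixed point, hence not finitely generated, and the associated equivariant sheaf is quasi-coherent but not coherent. So your inverse functor does not land in coherent sheaves on the category as you (and the paper's abbreviated definition) describe it. The correct repair, which is what Perling actually builds into his notion of a \emph{finite} family, is to add a generation condition: there exist finitely many $m_1,\dots,m_r\in M$ such that for every $m$ one has $E^{\sigma}_m=\sum_{i\,:\,m_i\leq_{\sigma}m}\chi^{\sigma}(m_i,m)\bigl(E^{\sigma}_{m_i}\bigr)$; this is equivalent to finite generation of the graded module, it is automatically satisfied by $\Gamma(X_\sigma,\mathcal{E})$ for $\mathcal{E}$ coherent, and with it the rest of your argument closes up. (Also note that the components of a finitely generated family need not ``vanish outside the $S_\sigma$-shifts of the generators''; they only need to be exhausted by the images of the multiplication maps.)
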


Let $\tau \le \sigma$ be a subcone. Let $i_{\tau,\sigma} : X_{\tau} \longrightarrow X_{\sigma} $ be the corresponding inclusion map. 
Define
$$i_{\tau\sigma}^* (E^{\sigma}) = E^{\sigma} \otimes_{k[S_{\sigma}]} k[S_{\tau}] \,.$$ Note that 
$i_{\tau\sigma}^* (E^{\sigma}) $ has a natural $M$--grading.

\begin{defn}\label{deltafam} Let $\Delta$ be a fan. A collection
$\{ \widehat{E}^{\sigma}\,\mid\, \sigma \,\in\, \Delta\}$ of finite  $\sigma$--families is called a finite  $\Delta$--family, denoted 
$\widehat{E}^{\Delta}$, if for every pair $\tau < \sigma $, there is an
isomorphism 
$\eta_{\tau\sigma} \,:\, i_{\tau\sigma}^* \widehat{E}^{\sigma} \,\longrightarrow
\, \widehat{E}^{\tau}$, such that for every triple 
$\rho \,<\,\tau \,<\,\sigma $, the following holds:
$$\eta_{\rho\sigma}  \,=\, \eta_{\rho\tau} \circ i_{\rho\tau}^* \, \eta_{\tau\sigma}\, .$$

A morphism of finite $\Delta$--families is a collection of morphisms
$$\{ \phi^{\sigma}\,:\, \widehat{E}^{\sigma} \,\longrightarrow\,
\widehat{E}'^{\sigma} \,\mid\, \sigma \,\in\, \Delta \}$$ of finite
 $\sigma$--families such that for all $\tau < \sigma$ the following diagram commutes:
 $$
 \begin{CD}
i_{\tau\sigma}^* (\widehat{E}^{\sigma}) @> i_{\tau\sigma}^* (\phi^{\sigma}) >> i_{\tau\sigma}^* (\widehat{E}'^{\sigma}) \\
@V \eta_{\tau \sigma} VV @V \eta'_{\tau \sigma} VV \\
 \widehat{ E}^{\tau} @> \phi^{\sigma} >> \widehat{E}'^{\tau} \end{CD} $$
\end{defn}

Since $S_{\sigma^{\perp}} \subseteq S_{\tau^{\perp}}$, there is a surjective group homomorphism $$ M/S_{\sigma^{\perp}} \longrightarrow 
 M/S_{\tau^{\perp}}\,.$$ Then $\eta_{\tau \sigma}$ induces an isomorphism 
 $\eta_{\tau \sigma} : ( i_{\tau\sigma}^* (E^{\sigma}_{[m]})) \longrightarrow E^{\tau}_{[m]}$. 
 
\begin{theorem}[{\cite{Per}}] The category of finite  $\Delta$--families is equivalent to the category of coherent $T$--equivariant sheaves 
over $X$.
\end{theorem}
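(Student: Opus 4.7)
The plan is to reduce this global statement to the affine case (the previous theorem) via standard sheaf gluing on the toric open cover $\{X_{\sigma}\}_{\sigma \in \Delta}$. The category equivalence on each affine chart $X_{\sigma}$ is already in hand, so the content of the present theorem is essentially that the compatibility data $\{\eta_{\tau\sigma}\}$ is the correct bookkeeping for how these local equivalences patch together.

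First I would construct the functor $\Phi$ from $\Delta$-families to $T$-equivariant coherent sheaves on $X$. Given $\widehat{E}^{\Delta} = \{\widehat{E}^{\sigma}\}$, apply the local equivalence of the previous theorem to obtain, for each $\sigma \in \Delta$, a $T$-equivariant coherent sheaf $\mc{E}^{\sigma}$ on $X_{\sigma}$. For $\tau < \sigma$, the identification $i_{\tau\sigma}^{*}\widehat{E}^{\sigma} \cong \widehat{E}^{\tau}$ supplied by $\eta_{\tau\sigma}$ corresponds under the local equivalence to a $T$-equivariant isomorphism $\widetilde{\eta}_{\tau\sigma}: \mc{E}^{\sigma}|_{X_{\tau}} \xrightarrow{\sim} \mc{E}^{\tau}$. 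The triple compatibility $\eta_{\rho\sigma} = \eta_{\rho\tau} \circ i_{\rho\tau}^{*}\eta_{\tau\sigma}$ translates into the cocycle condition $\widetilde{\eta}_{\rho\sigma} = \widetilde{\eta}_{\rho\tau} \circ \widetilde{\eta}_{\tau\sigma}|_{X_{\rho}}$ on triple intersections of the form $X_{\rho} \subset X_{\tau} \subset X_{\sigma}$. Because the $X_{\sigma}$ form an open cover of $X$ and any intersection $X_{\sigma_1} \cap X_{\sigma_2}$ equals $X_{\sigma_1 \cap \sigma_2}$, this cocycle data is exactly what is needed to glue the $\mc{E}^{\sigma}$ to a coherent sheaf $\mc{E}$ on $X$; since all gluing isomorphisms are $T$-equivariant, $\mc{E}$ inherits a $T$-equivariant structure. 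Functoriality on morphisms is checked by the same argument: the commuting diagram in Definition \ref{deltafam} is precisely the condition ensuring that the locally-defined morphisms patch to a global morphism.

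Next I would construct the inverse functor $\Psi$. Given a $T$-equivariant coherent sheaf $\mc{E}$ on $X$, set $\widehat{E}^{\sigma}$ to be the $\sigma$-family associated to $\mc{E}|_{X_{\sigma}}$ by the local equivalence. For $\tau < \sigma$, there is a canonical isomorphism of $T$-equivariant sheaves $\mc{E}|_{X_{\sigma}}|_{X_{\tau}} \cong \mc{E}|_{X_{\tau}}$, which under the local equivalence produces an isomorphism $\eta_{\tau\sigma}: i_{\tau\sigma}^{*}\widehat{E}^{\sigma} \to \widehat{E}^{\tau}$. The cocycle identity for triples follows from the corresponding identity for restrictions of sheaves, so the collection $\{\widehat{E}^{\sigma}, \eta_{\tau\sigma}\}$ is a $\Delta$-family.

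It then remains to verify that $\Phi$ and $\Psi$ are quasi-inverse. In one direction, starting from a sheaf $\mc{E}$, reassembling by gluing the local pieces $\mc{E}|_{X_{\sigma}}$ along the canonical restriction isomorphisms yields a sheaf naturally isomorphic to $\mc{E}$; this is the standard fact that a sheaf is determined by its restrictions to an open cover together with the restriction isomorphisms. In the other direction, the unit and counit at the level of each $\sigma$-family are supplied by the previous theorem, and the compatibility with the $\eta_{\tau\sigma}$ is automatic from the construction. The main technical point to be careful with, and the step I expect to require the most attention, is confirming that the induced maps $\eta_{\tau\sigma}: i_{\tau\sigma}^{*}(E^{\sigma}_{[m]}) \to E^{\tau}_{[m]}$ at the isotypical level (as noted right after Definition \ref{deltafam}) truly correspond to the naive restriction of $T$-equivariant sections, including faithful tracking of the weight lattice quotient $M/S_{\sigma^{\perp}} \to M/S_{\tau^{\perp}}$; once this compatibility is pinned down, the equivalence follows by combining the affine statement with descent along the open cover $\{X_{\sigma}\}$.
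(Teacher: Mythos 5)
The paper itself offers no proof of this statement; it is quoted directly from Perling, whose argument is exactly the one you outline: apply the affine equivalence on each invariant chart $X_{\sigma}$ and glue along the cover $\{X_{\sigma}\}_{\sigma\in\Delta}$, using that $X_{\sigma_1}\cap X_{\sigma_2}=X_{\sigma_1\cap\sigma_2}$ so that the chain compatibilities $\eta_{\rho\sigma}=\eta_{\rho\tau}\circ i_{\rho\tau}^{*}\eta_{\tau\sigma}$ yield the cocycle condition on all overlaps. Your proposal is correct and follows essentially this same standard route, so no further comparison is needed.
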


For a $T$--equivariant subsheaf $\mc{F}$ of $\mc{E}$, one has ${F}^{\sigma}_m \subseteq {E}^{\sigma}_m $ for every $\sigma \in \Delta$ and 
$m\in M$.

\section{Rank of an equivariant torsion--free coherent sheaf}

In this section, we derive a formula for the rank of an equivariant torsion--free coherent sheaf on $X$. 

\begin{defn} \label{genf} For an equivariant  torsion--free coherent sheaf $\mc{F}$  and an $n$--dimensional cone $\sigma$, 
define 
$$Gen(\widehat{F}^{\sigma})\,=\, \{m' \,\in\, M\,\mid\, \dim F^{\sigma}_{m} \, <\, \dim F^{\sigma}_{m'} \ \ \forall\ m\, 
<_{\sigma} \,m' \} \,.$$
\end{defn}
  
Since $\mc{F}$ is a coherent sheaf, it follows that $Gen(\widehat{F}^{\sigma}) $ is finite for every $\sigma$.
Note that the finite collection of graded vector spaces
$$\{F^{\sigma}_m\,\mid\, m \,\in\, Gen(\widehat{F}^{\sigma}) ,\ \sigma \,\in\, \Delta(n) \} \,,$$ 
and the isomorphisms $\eta_{\tau \sigma}$ of the previous section, together
determine the $\Delta$--family $\widehat{F}^{\Delta}$.
  
  A coherent sheaf is locally free on some open subset and its rank equals the rank of its restriction to such an open subset.
  By equivariance, the coherent sheaf $\mc{F}$ must be locally free on the dense torus orbit $X_{\{0\}}$, where  $\{0\}$ denotes the trivial cone. 
  By localizing to the dense torus orbit, we find that ${\rm rank}\, \mc{F} \,= \,\dim F^{\{0\}}_m$ for all $m \,\in\, M$.
Now it is straight-forward to check that
\begin{equation}\label{rank1}
{\rm rank}\, \mc{F} \,= \,\dim F^{\sigma}_m \,, \; {\rm where}\; 
m' <_{\sigma} m \; {\rm for \; all} \;  m' \,\in\, Gen(\widehat{F}^{\sigma})\, ,\end{equation}  
for any $\sigma \in \Delta(n)$.

Let $\alpha$ be any one dimensional cone. Note that the spaces $F^{\alpha}_m$ and $F^{\alpha}_{m'}$ are isomorphic if $m-m' \,\in\, S_{\alpha^{\perp}}$,
or in other words if $\langle \alpha,\, m \rangle \,=\, \langle \alpha,\, m' \rangle$.

\begin{defn}\label{de}  
	Let $\mc{F}$ be a $T$--equivariant   torsion--free  coherent sheaf on $X$.  
	For  a subcone $\alpha\,\in\, \Delta(1) $ and $\lambda \,\in\, \mathbb{Z}$, define 
	$$d (\mc{F}, \alpha, \lambda) \,=\, \dim F^{\alpha}_m, \quad {\rm where} \quad \lambda = \langle {\alpha}, m \rangle \,. $$
	Define 
	$$  e(\mc{F}, \alpha, \lambda) = d (\mc{F}, \alpha, \lambda) - d (\mc{F}, \alpha, \lambda -1)\,. $$ 
\end{defn}

We remark that $d (\mc{F}, \alpha, \lambda)\,=\, \dim F^{\alpha}_{[m]}$, where $[m]$ 
denotes the equivalence class of $m$ in $M_{\alpha}$.

\begin{lemma}\label{rank} For an equivariant torsion--free coherent  sheaf $\mc{F}$ on $X$, the equality
	$$ {\rm rank} (\mc{F}) \,=\,  \sum_{\lambda \in \mathbb{Z}}   e(\mc{F}, \alpha, \lambda)$$
	holds for all $\alpha \,\in\, \Delta(1)$. \end{lemma}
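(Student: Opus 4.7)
The plan is to recognize the sum $\sum_{\lambda \ge -1} e(\mc{F}, \alpha, \lambda)$ as a telescoping series. Substituting the definition of $e$, the finite truncations satisfy $\sum_{\lambda = -1}^N e(\mc{F}, \alpha, \lambda) = d(\mc{F}, \alpha, N) - d(\mc{F}, \alpha, -2)$, so the task splits into two steps: (i) show $d(\mc{F}, \alpha, -2) = 0$, and (ii) show that $d(\mc{F}, \alpha, \lambda)$ stabilizes at $\mathrm{rank}(\mc{F})$ for $\lambda \gg 0$. Together these give both convergence of the series and the claimed identity.

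Step (i) is immediate: since $\mc{F} \subset TX$, we have $F^\alpha_m \subseteq E^\alpha_m$ for every $m$, and \eqref{etx} forces $d(TX, \alpha, \lambda) = 0$ for every $\lambda \le -2$, hence the same for $d(\mc{F}, \alpha, \lambda)$.

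For step (ii), fix any $\sigma \in \Delta(n)$ having $\alpha$ as a one-dimensional face, and relabel the generators of $\sigma$ so that $\alpha = \alpha_1^\sigma$; then $\langle \alpha, m_1^\sigma\rangle = 1$ while $\langle \alpha, m_j^\sigma\rangle = 0$ for $j \ne 1$ by \eqref{dual}. Two ingredients drive the argument. First, because $\mc{F}$ is torsion-free, each multiplication map $\chi^\alpha(m, m + m_1^\sigma)\colon F^\alpha_m \to F^\alpha_{m + m_1^\sigma}$ is injective, so $d(\mc{F}, \alpha, \lambda)$ is nondecreasing in $\lambda$. Second, inverting $\chi(m_1^\sigma)$ converts $X_\alpha$ into the dense torus orbit $X_{\{0\}}$ (the other $\chi(m_j^\sigma)$ are already units on $X_\alpha$, since $m_j^\sigma \in S_{\alpha^\perp}$), so $F^{\{0\}}_m$ arises as the direct limit of $F^\alpha_{m + k m_1^\sigma}$ along the injections above. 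Since $T$ acts freely on $X_{\{0\}}$, every equivariant locally free sheaf there is trivializable as a $T$-bundle, which forces $\dim F^{\{0\}}_m = \mathrm{rank}(\mc{F}) =: r$ for every $m$. A nondecreasing sequence of nonnegative integers bounded above by $r$ and with direct limit $r$ must stabilize at $r$, completing step (ii).

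The main obstacle is step (ii), and specifically the identification of the stable value with $r$: it uses not just the existence of a limit but also the explicit computation $\dim F^{\{0\}}_m = r$, which ultimately rests on the elementary yet essential triviality of equivariant vector bundles on a free $T$-space. The remaining telescoping manipulations and the ambient bound $d(\mc{F}, \alpha, \cdot) \le d(TX, \alpha, \cdot)$ are routine.
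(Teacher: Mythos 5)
Your argument is correct, and it is worth recording where it departs from the paper's proof. The common backbone is that the multiplication maps are injective because $\mc{F}$ is torsion-free, and that equivariance forces $\dim F^{\{0\}}_m \,=\, \mathrm{rank}(\mc{F})$ for every $m$ on the dense orbit; the paper uses exactly this observation to obtain \eqref{rank1}. After that the two routes diverge. The paper deduces the ray-level stabilization \eqref{rank3} by citing \eqref{rank1}, and then works on the invariant affine curve $A_\alpha$: it invokes freeness of the $k[A_\alpha]$-module $\Gamma(A_\alpha,\mc{F})$ and shows, by an explicit graded computation, that representatives of the jump weights are $k[A_\alpha]$-linearly independent. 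You instead make the telescoping structure of $\sum_{\lambda} e(\mc{F},\alpha,\lambda)$ explicit and prove the stabilization of $d(\mc{F},\alpha,\cdot)$ directly: graded localization at $\chi(m_1^\sigma)$ (the other $\chi(m_j^\sigma)$, $j \neq 1$, being units on $X_\alpha$ since $m_j^\sigma \in S_{\alpha^\perp}$) exhibits $F^{\{0\}}_m$ as the filtered colimit of the injective chain $F^{\alpha}_{m+k m_1^\sigma}$, and a nondecreasing chain of finite-dimensional spaces injecting into a colimit of dimension $r$ must stabilize at $r$. What this buys is self-containedness: the passage from the top-cone statement \eqref{rank1} to the ray statement \eqref{rank3}, which the paper treats as immediate, is precisely what your colimit argument supplies, and once stabilization is in hand the telescoping identity replaces the entire $A_\alpha$ linear-independence argument. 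The paper's route, in exchange, keeps the geometric picture of detecting the jumps along an invariant curve transverse to $D_\alpha$, which is closer in spirit to the explicit section-by-section analysis used later for particular subsheaves. One cosmetic remark: in your step (i) the vanishing $d(TX,\alpha,\lambda)\,=\,0$ for $\lambda \le -2$ really comes from \eqref{char2} (equivalently, the count behind \eqref{etx} and \eqref{lamb}) rather than from the $e$-values of \eqref{etx} alone, but this is the same elementary computation, so nothing is missing.
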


\begin{proof} Fix $\alpha \in \Delta(1)$. 
	We may identify $M_{\alpha}$ with $\mathbb{Z}$ via the association $[m]\,\longmapsto\, \langle \alpha,\, m \rangle$.
	Note that $F^{\alpha}_{j}$ has a natural inclusion in $F^{\alpha}_{j+1}$ under the multiplication by the 
	character $\chi^{\alpha}(1)$ of the torus $T_{\alpha}$; see  \eqref{chim2}.
	
	By the finiteness of  $Gen(\widehat{F}^{\sigma})$ for all $\sigma$, the set $$S\,:=\,\{ \lambda \,\in\, \mathbb{Z} \,\mid\,  
	e(\mc{F}, \alpha, \lambda) \,\neq 0\,\}$$ is finite. 
	Suppose $\lambda_1 \,<\,  \cdots \,<\, \lambda_m$ are the elements of $S$. 
	By \eqref{rank1}, we have
	\begin{equation}\label{rank3} {\rm rank}\,\mc{F} \,=\, \dim (F^{\alpha}_j)\,, \; {\rm for} \;  j \ge \lambda_m \,. \end{equation} 
	
	Recall the  affine subvariety $A_{\alpha}$ of $X_{\alpha}$ defined by the ideal generated by $$\{\chi(m)-1\,\mid\, m \,\in\, S_{\alpha}^{\perp}\} \,.$$
	This is, in fact, a nonsingular affine curve. Therefore, as $\mc{F}$ is torsion--free,  $\Gamma(A_{\alpha}, \mc{F})$ is a free
	$k[A_{\alpha}]$--module.
	Let $\{f_i \,\in\, F^{\alpha}_{\lambda_i} \,\mid\, 1\,\le\, i \,\le \,m \}$ be any collection such that 
	$f_i$ is not in the image of $F^{\alpha}_{\lambda_i -1}$ under the multiplication by $\chi^{\alpha}(1)$. 
	To prove the lemma, in view \eqref{rank3} of it is enough to show that $f_1,\, \cdots,\, f_m$ are $k[A_{\alpha}]$--linearly independent. 
	
	Assume the contrary, namely, $f_1,\, \cdots,\, f_m$ are not $k[A_{\alpha}]$--linearly independent. Note that $k[A_{\alpha}] $ is generated as a $k$-vector space
	by $\{ \chi^{\alpha}(p)\,\mid\, p \,\in\, \mathbb{Z}_{\ge 0}  \}$. Therefore, we have 
	\begin{equation}\label{relation} \sum_{i=1}^m  c_i \, \chi^{\alpha}(d_i) \, f_i =0 \end{equation}
	for some nontrivial $c_i \in k$, and some nonnegative integers $d_i$. We may assume without loss of generality that $c_m \neq 0$. 
	Moreover, by considering the direct sum decomposition \eqref{isod},   we may assume without loss of generality that each 
	summand $\chi^{\alpha}(d_i) \, f_i$ belongs to the same graded component $F^{\alpha}_d$, where $d \ge m$.
	Then, dividing \eqref{relation} by $c_m\, \chi^{\alpha}(d-m)$, we obtain that 
	$f_m$ belongs to the image of $F^{\alpha}_{\lambda_{m-1}}$ which is a contradiction.
	This concludes the proof. 
\end{proof}

\section{Equivariant coherent subsheaves of $TX$}

Let $\sigma$ be an $n$-dimensional cone in $\Delta$. Let $\alpha^{\sigma}_1, \,\cdots,\, \alpha^{\sigma}_n$ 
be the primitive integral generators of the one-dimensional
faces of $\sigma$. Since $X_{\sigma}$ is nonsingular, the vectors $\alpha^{\sigma}_1,\, \cdots,\, \alpha^{\sigma}_n$ form a basis of the 
$\mathbb{Z}$--module $N$.
Let
$${\sigma}^{\vee} \,=\, \{m \,\in\, M\otimes \mathbb{R}\,\mid\,
\langle m,\, v \rangle \,\geq\, 0 \ \ \forall \ v \,\in\, \sigma \}$$ be the dual
cone of $\sigma$. Define $ m_i^{\sigma} \,\in\, \sigma^{\vee} \bigcap M$ by 
\begin{equation} \label{dual} 
\langle m_i^{\sigma}, \alpha^{\sigma}_j \rangle = \delta_{ij} \, , 
\end{equation} 
where $\delta_{ij}$ denotes the Kronecker delta. Note that $m_1^{\sigma},\, \cdots,\, m_n^{\sigma}$ form a $\mathbb{Z}$--basis of $M$.

Set $\mc{E} = TX$. Then $ E^{\sigma} $ is a free $\mc{O}_{X_{\sigma}}$--module of rank $n$, with generators having $T$--weights
$-m^{\sigma}_i\,, 1 \le i \le n$. 
To be precise, let
$$z_i = \chi(m^{\sigma}_i) \quad {\rm  and } \quad \partial_{z_i} = \frac{\partial}{\partial z_i} \quad {\rm for} \; 1 \le i \le n \,.$$ 
Then $ E^{\sigma} $ is freely generated over $\mc{O}_{X_{\sigma}}$ by $\{\partial_{z_1},\, \cdots,\, \partial_{z_n}\}$. In the convention followed by Perling \cite{Per}, $T$ acts on $\chi(m)$ with weight $m$. Hence, the section $\partial_{z_i}$ has $T$--weight $- m^{\sigma}_i$.
Note that $\dim E^{\sigma}_{-m^{\sigma}_i} =1$ for $1 \le i \le n$.
Consequently, the $\sigma$--family $\widehat{E}^{\sigma}$ has the following properties:
\begin{equation}\label{char1}
\begin{array}{l}
(a)\; \dim E^{\sigma}_{m} \,=\, | \{ m_i^{\sigma}\,\mid\, - m_i^{\sigma} \,\leq_{\sigma}\, m\} |\, . \\
(b)\; {\rm For \; every}\:  m \,\leq_{\sigma}\, m' , \; {\rm the \; multiplication  \; map \; } \chi^{\sigma} (m,m')\;  
{\rm  is\;  injective}.
\end{array}
\end{equation}

For a subcone $\tau \,<\, \sigma$, the $\tau$--family $\widehat{E}^{\tau}$ satisfies the following:
\begin{equation}\label{char2}
\begin{array}{l}
(a) \;  \dim E^{\tau}_{m} \,=\, | \{ m_i^{\sigma} \,\mid\,  - m_i^{\sigma} \,\leq_{\tau}\, m  \} |\, ,  \\
(b) \;  {\rm For \; every}\;   m \leq_{\tau} m',   \; {\rm the \; multiplication  \; map \; } \chi^{\tau} (m,m')\;  
{\rm  is\;  injective}.
\end{array}
\end{equation}
 By \eqref{char1}, 
  \begin{equation}\label{etx}
  e(TX, \alpha, \lambda) = \left\{ \begin{array}{ll}  
  1 & {\rm if} \;    \lambda= - 1,  \\
  n-1 & {\rm if} \;    \lambda=0,  \\    
  0  & {\rm otherwise},
  \end{array} \right. \end{equation}
  for every $\alpha \in \Delta(1)$.
  
  Let $\mc{F}$ be any equivariant coherent subsheaf of $TX$. Then, $\mc{F}$ is torsion--free.
  Therefore, if $ e(\mc{F}, \alpha, \lambda) \,\neq\, 0$, then 
  $ d(\mc{F}, \alpha, \lambda) \,\neq\, 0$, and consequently
  $E^{\alpha}_m \,\neq\, {0}$, where $\langle \alpha, \,m \rangle \,=\, \lambda$ and $E^{\alpha} \,=\, \Gamma(TX_{\alpha})$. 
  As a result, we have
  \begin{equation}\label{lamb}  e(\mc{F}, \alpha, \lambda) \,\neq\, 0 \ \implies\  \lambda \,\ge\, -1 \,.\end{equation} 
  Hence, for an equivariant coherent subsheaf $\mathcal{F}$ of $TX$, the rank formula of Lemma \ref{rank} takes the form,
  \begin{equation}\label{rank2}
  {\rm rank} (\mc{F}) \,=\,  \sum_{\lambda \in \mathbb{Z}_{\ge -1}}   e(\mc{F}, \alpha, \lambda) 
  \end{equation}  
  where $\alpha$ is an arbitrary element of $\Delta(1)$.
  
  Let $D_{\alpha}$ denote the torus invariant Weil divisor of $X$ corresponding to $\alpha \in [\Delta(1)]$. 
  
  \begin{theorem}[{\cite[Corollary 1.2.18]{Kool1}}]\label{thmc1}
  	The first Chern class of $\mc{F}$ has the expression 
  	$$c_1(\mc{F}) \,=\, - \sum_{\alpha\in \Delta(1), \lambda \in \mathbb{Z}} \lambda \, e(\mc{F}, \alpha, \lambda) \, D_{\alpha} \,. $$
  \end{theorem}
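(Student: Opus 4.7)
My plan is to reduce the problem to the case of an equivariant line bundle by passing to the determinant. Since $\mc{F}$ is a torsion-free coherent sheaf of rank $r$ on the smooth variety $X$, its determinant $\det \mc{F} := (\wedge^r \mc{F})^{\vee\vee}$ is an equivariant reflexive rank-one sheaf, hence an equivariant line bundle, and $c_1(\mc{F}) = c_1(\det \mc{F})$. Thus it suffices to establish two claims: (i) for every $\alpha \in \Delta(1)$ the Klyachko data of $\det \mc{F}$ at $\alpha$ has its unique weight jump at
$$\mu_\alpha \,:=\, \sum_{\lambda \in \mathbb{Z}} \lambda \, e(\mc{F}, \alpha, \lambda)\,;$$
and (ii) any equivariant line bundle $L$ on $X$ whose Klyachko jump at the ray $\alpha$ occurs at $\mu_\alpha$ is isomorphic to $\mc{O}_X\bigl(-\sum_\alpha \mu_\alpha D_\alpha\bigr)$.

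For (i) I would localize to the smooth affine curve $A_\alpha \subset X_\alpha$ considered in Section 3. On this curve the torsion-free sheaf $\mc{F}$ is automatically locally free, and $\Gamma(A_\alpha, \mc{F})$ is a free $k[A_\alpha]$-module, as the authors already exploited in the proof of Lemma \ref{rank}. Choosing a free basis $\{f_1,\ldots,f_r\}$ compatible with the weight decomposition \eqref{isod}, so that each $f_i$ lies in some weight component $F^{\alpha}_{[m_i]}$ and is not in the image of $\chi^{\alpha}(1)$ from a lower graded piece, one checks that the multiset $\{\langle \alpha, m_i\rangle\}$ counts each integer $\lambda$ with multiplicity $e(\mc{F},\alpha,\lambda)$. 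Consequently $f_1 \wedge \cdots \wedge f_r$ generates $\det \mc{F}|_{A_\alpha}$ with $T_\alpha$-weight $\sum_i \langle \alpha, m_i\rangle = \mu_\alpha$, proving (i).

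For (ii) I would appeal to the classical description of equivariant line bundles on a smooth toric variety: every such $L$ is of the form $\mc{O}_X(\sum_\alpha c_\alpha D_\alpha)$, and on the chart $X_\alpha$ the sheaf $\mc{O}_X(c_\alpha D_\alpha)$ is freely generated by the character $\chi(-c_\alpha m_1^{\alpha})$, with $m_1^{\alpha}$ as in \eqref{dual}. This generator has weight $-c_\alpha$ under the pairing with $\alpha$, so the Klyachko jump of $L$ at $\alpha$ is $\mu_\alpha = -c_\alpha$, yielding the desired isomorphism. Combining (i) and (ii) gives $c_1(\mc{F}) = c_1(\det \mc{F}) = -\sum_{\alpha} \mu_\alpha D_\alpha$, which is the claimed formula.

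The main technical obstacle here is the bookkeeping of signs: the Klyachko grading is the grading on \emph{sections}, so the natural generators $\partial_{z_i}$ of $TX|_{X_\sigma}$ carry negative weights $-m_i^\sigma$, and this is the origin of the overall minus sign in the theorem. The computations in (i) and (ii) must be set up consistently with \eqref{etx} so that the example $\mc{F} = TX$ correctly recovers $c_1(TX) = \sum_\alpha D_\alpha = -K_X$; this provides a useful sanity check and pins down the convention in the key step where the weight of the generator of $\det \mc{F}|_{A_\alpha}$ is identified with $\mu_\alpha$.
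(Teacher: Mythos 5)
The paper offers no proof of Theorem \ref{thmc1} at all --- it is imported verbatim from Kool's thesis --- so there is nothing internal to compare you against; what you have written is essentially the classical Klyachko-style argument that underlies the cited result, and it is correct. Your two reductions are both sound: $c_1(\mc{F})=c_1(\det\mc{F})$ for a torsion-free sheaf on a smooth variety, and an equivariant line bundle is $\mc{O}_X(\sum_\alpha c_\alpha D_\alpha)$ with the jump of its weight datum at the ray $\alpha$ occurring at $-c_\alpha$ (a twist of the equivariant structure by a character shifts all jumps but changes the divisor only by the principal divisor of that character, so $c_1$ is unaffected --- worth saying explicitly). For step (i), the cleanest justification of the facts you use is the observation that $\mc{F}$ is automatically locally free on every chart $X_\alpha$, $\alpha\in\Delta(1)$: the non-locally-free locus of a torsion-free sheaf is closed of codimension $\ge 2$ and $T$-invariant, while $X_\alpha$ contains only orbits of codimension $\le 1$. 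This gives at once that the descended sheaf on the curve $A_\alpha$ is free (the fact the paper already invokes in Lemma \ref{rank}), that $\det\mc{F}|_{X_\alpha}=\wedge^r(\mc{F}|_{X_\alpha})$ with no double dual needed, and that a homogeneous ($M_\alpha$-graded) basis $f_1,\dots,f_r$ exists, whose degrees realize each $\lambda$ with multiplicity $e(\mc{F},\alpha,\lambda)$ exactly as you claim; hence the weight of $f_1\wedge\cdots\wedge f_r$ is $\mu_\alpha=\sum_\lambda \lambda\, e(\mc{F},\alpha,\lambda)$. Your sign bookkeeping is consistent with \eqref{etx} and \eqref{dual}: the check $\mc{F}=TX$ gives $\mu_\alpha=-1$ and recovers $c_1(TX)=\sum_\alpha D_\alpha$. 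So the proposal is a complete and correct route to the statement, arguably more self-contained than the paper's bare citation; the only polish needed is to state the local freeness on $X_\alpha$ and the character-twist remark explicitly.
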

  
  Let $H = \sum a_{\alpha}D_{\alpha}$ be a polarization of $X$; in other words, $H$ is an  ample Cartier divisor on $X$.  Let $P$ be the polytope of $X$ in $M \otimes \mathbb{R}$ associated to $H$.
  The convex polytope $P$ basically encodes the linear system of $H$. It has a facet $P_{\alpha}$ corresponding to each $\alpha$. 
  The facet $P_{\alpha}$ lies in the hyperplane $\{v \,\in\, M \otimes \mathbb{R}\,\mid\, \langle v,\, \alpha \rangle \,=\, -a_{\alpha}\}$.
  Now $P$ is easily determined from these supporting hyperplanes, and has the explicit formula $$P \,=\,  \{v \,\in\, M \otimes \mathbb{R}
  \,\mid\, \langle v,\, \alpha \rangle \,\ge\, -a_{\alpha} \ \ \forall \ \alpha \,\in\, [\Delta(1)] \} \,.$$ 
  
  By  \cite[Corollary on p. 112]{Ful}, the intersection product 
  \begin{equation}\label{int} D_{\alpha} \cdot H^{n-1} \,= \,(n-1)! \, {\rm Vol}(P_{\alpha} ) \,, 
  \end{equation}
  where the volume is measured with respect to the lattice $S_{\alpha^{\perp}}$ (see \eqref{ssperp}).

Using Theorem \ref{thmc1}, \eqref{int}, \eqref{etx} and \eqref{lamb}, we now have
\begin{equation}\label{degtx} \deg\, TX =  (n-1)! \, \sum_{\alpha \in \Delta(1)} {\rm Vol}(P_{\alpha} ) \,, 
\end{equation}
and
\begin{equation}\label{degf} \deg \mc{F} =  - (n-1)! \,   \sum_{\alpha\in \Delta(1), \lambda \in \mathbb{Z}_{\ge-1}} \lambda \, e(\mc{F}, \alpha, \lambda) {\rm Vol}(P_{\alpha} ) \,. 
\end{equation}

\begin{lemma} An upper bound for the slope of an equivariant coherent subsheaf $\mathcal{F}$ of $TX$ of rank $r$ is given by
$$ \mu( \mc{F})  \le  \frac{(n-1)!}{r} \,   \sum_{\alpha\in \Delta(1)}  {\rm Vol}(P_{\alpha} ) \,.  $$
\end{lemma}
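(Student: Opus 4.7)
The plan is to estimate $\deg \mathcal{F}$ directly from formula \eqref{degf}, splitting the work ray-by-ray. For each $\alpha \in \Delta(1)$, I will analyze the inner sum
$$S_\alpha \,:=\, -\sum_{\lambda \in \mathbb{Z}_{\geq -1}} \lambda \cdot e(\mathcal{F}, \alpha, \lambda),$$
and show that $S_\alpha \leq 1$. After that, summing against the nonnegative weights $(n-1)!\,\mathrm{Vol}(P_\alpha)$ and dividing by $r$ will deliver the stated bound.

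The key estimate $S_\alpha \leq 1$ will rest on two observations about the multiplicities $e(\mathcal{F}, \alpha, \lambda)$. First, I expect to argue that each $e(\mathcal{F}, \alpha, \lambda)$ is nonnegative: because $\mathcal{F}$ is a subsheaf of the torsion-free sheaf $TX$, it is itself torsion-free, so the multiplication maps on the $\alpha$-family $\widehat{F}^\alpha$ remain injective (exactly as in \eqref{char2}(b)), which forces $d(\mathcal{F}, \alpha, \cdot)$ to be weakly increasing in $\lambda$. Second, the inclusion $\mathcal{F} \subseteq TX$ gives $d(\mathcal{F}, \alpha, -1) \leq d(TX, \alpha, -1) = 1$ by \eqref{etx}, and combined with $d(\mathcal{F}, \alpha, -2) = 0$ (which follows from \eqref{lamb} applied to $\mathcal{F}$ together with the same sub-bundle comparison to $TX$), this yields $e(\mathcal{F}, \alpha, -1) \leq 1$.

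Putting these together: the $\lambda = 0$ term in $S_\alpha$ vanishes, and for every $\lambda \geq 1$ the factor $-\lambda$ is nonpositive while the multiplicity is nonnegative, so the only positive contribution is the $\lambda = -1$ term, which is at most $1$. Hence $S_\alpha \leq 1$. Plugging into \eqref{degf} then gives
$$\deg \mathcal{F} \,=\, (n-1)! \sum_{\alpha \in \Delta(1)} S_\alpha \cdot \mathrm{Vol}(P_\alpha) \,\leq\, (n-1)! \sum_{\alpha \in \Delta(1)} \mathrm{Vol}(P_\alpha),$$
and dividing by $r$ completes the argument.

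I do not anticipate any serious obstacle here; the lemma is essentially a clean combinatorial estimate once the data in Definition \ref{de} and formula \eqref{degf} are in hand. The only mildly delicate step is keeping track of the sign conventions and verifying that torsion-freeness of $\mathcal{F}$—which is what ultimately yields nonnegativity of the increments $e(\mathcal{F}, \alpha, \lambda)$—is the property that makes the ray-by-ray estimate work.
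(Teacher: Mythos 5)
Your argument is correct and follows essentially the same route as the paper: both estimate \eqref{degf} ray by ray, the crux in each case being that the only positive contribution comes from $\lambda=-1$ and that $e(\mc{F},\alpha,-1)\le 1$. The only (harmless) difference is that you get $e(\mc{F},\alpha,-1)\le 1$ from the dimension comparison $d(\mc{F},\alpha,-1)\le d(TX,\alpha,-1)=1$ via \eqref{etx}, whereas the paper phrases the same fact by noting that all weight-$(-1)$ sections are multiples of $\partial_{z_\alpha}$ on the dense torus; you also spell out the nonnegativity of the increments $e(\mc{F},\alpha,\lambda)$ coming from torsion-freeness, which the paper leaves implicit.
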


\begin{proof} Fix an $\alpha \in \Delta(1)$ and an $n$-dimensional cone $\sigma$ containing $\alpha$. 
Consider the summand
 $$   - (n-1)! \,{\rm Vol}(P_{\alpha} )\, \sum_{ \lambda \in \mathbb{Z}_{\ge-1}} \lambda \, e(\mc{F}, \alpha, \lambda) \, 
$$
of \eqref{degf} corresponding to $\alpha$.  As the sum is decreasing in the $\lambda$'s, the optimal choice would be
$\lambda \,=\, -1$. However if $ e(\mc{F}, \alpha, -1) \,\neq\, 0$, then there is an element of the form 
$g(z)\partial{z_{\alpha}}$ in $F^{\sigma}$ with $g(z) \,\in\, k[S_{\sigma}]$. Moreover all such elements are multiples of one another 
when restricted to the dense torus $X_{\{0\}}$. Therefore, we have $ e(\mc{F}, \alpha, -1)  \,\le\, 1$. Hence, 
 $$   - (n-1)! \,{\rm Vol}(P_{\alpha} )\, \sum_{ \lambda \in \mathbb{Z}_{\ge-1}} \lambda \,
e(\mc{F}, \alpha, \lambda) \,\le\,  (n-1)! \,{\rm Vol}(P_{\alpha} )\,.$$
This completes the proof of the lemma.
\end{proof}

The above upper bound is not very sharp. We will use finer estimates in what follows.

\begin{lemma}\label{compare} Suppose  $\mc{F} ,\; \mc{G}$  are equivariant coherent subsheaves of $TX$ having the same
 rank. If $\mc{F}$ is a proper subsheaf
of $\mc{G}$, then $\mu(\mc{F}) <  \mu(\mc{G})$.
\end{lemma}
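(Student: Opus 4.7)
The plan is to express $\deg(\mc{G})-\deg(\mc{F})$ as a manifestly nonnegative combinatorial sum via the formula \eqref{degf}, and then argue strict positivity from the assumption $\mc{F}\subsetneq\mc{G}$. The key simplification is a discrete summation by parts applied to the difference $\Delta d(\lambda) := d(\mc{G},\alpha,\lambda) - d(\mc{F},\alpha,\lambda)$, which turns out to be supported in a finite range of $\lambda$.

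Fix $\alpha\in\Delta(1)$. The inclusion $\mc{F}\subseteq\mc{G}$ yields $F^\alpha_{[m]}\subseteq G^\alpha_{[m]}$ for every $[m]\in M_\alpha$, so $\Delta d(\lambda)\ge 0$ for all $\lambda$. Since $\mathrm{rank}(\mc{F})=\mathrm{rank}(\mc{G})=r$, both $d(\mc{F},\alpha,\lambda)$ and $d(\mc{G},\alpha,\lambda)$ stabilize at the common value $r$ for $\lambda\gg 0$ by \eqref{rank1}, and both vanish for $\lambda\le -2$ by \eqref{lamb}; hence $\Delta d$ has finite support. Writing $e(\cdot,\alpha,\lambda)=d(\cdot,\alpha,\lambda)-d(\cdot,\alpha,\lambda-1)$ and re-indexing, Abel summation gives
\[
\sum_{\lambda} \lambda\bigl(e(\mc{G},\alpha,\lambda)-e(\mc{F},\alpha,\lambda)\bigr) \;=\; -\sum_{\lambda\ge -1} \Delta d(\lambda).
\]
Substituting into \eqref{degf}, I would obtain
\[
\deg\mc{G}-\deg\mc{F} \;=\; (n-1)! \sum_{\alpha\in\Delta(1)} \mathrm{Vol}(P_\alpha) \sum_{\lambda\ge -1} \Delta d(\lambda),
\]
a sum of nonnegative terms (ampleness of $H$ ensures $\mathrm{Vol}(P_\alpha)>0$), which already proves $\mu(\mc{G})\ge\mu(\mc{F})$.

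The remaining step---the main obstacle---is to upgrade this to strict inequality. The hypothesis $\mc{F}\subsetneq\mc{G}$ supplies a strict inclusion $F^\sigma_{m_0}\subsetneq G^\sigma_{m_0}$ at some $\sigma\in\Delta(n)$ and some $m_0\in M$, but a priori this discrepancy could be ``lost'' upon passage to a localized $\alpha$-family for a ray $\alpha<\sigma$ (if the quotient $\mc{G}/\mc{F}$ happened to be supported only in codimension $\ge 2$). To trace the strict inclusion down to at least one ray, I would use the injectivity of the multiplication maps from \eqref{char1}(b), together with the compatibility isomorphisms $\eta_{\alpha\sigma}$ of Definition \ref{deltafam}: since both sheaves sit inside the locally free sheaf $TX$, an inductive descent from $\sigma$ to its codimension-one faces should force the strict inclusion to persist at some ray $\alpha$, making at least one $\Delta d(\lambda)$ strictly positive and completing the proof.
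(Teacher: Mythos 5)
Your first step is sound: the Abel summation identity
$\sum_{\lambda}\lambda\bigl(e(\mc{G},\alpha,\lambda)-e(\mc{F},\alpha,\lambda)\bigr)=-\sum_{\lambda\ge -1}\bigl(d(\mc{G},\alpha,\lambda)-d(\mc{F},\alpha,\lambda)\bigr)$
is correct (the boundary terms cancel precisely because the ranks agree, and finite support follows from \eqref{lamb} and \eqref{rank1}), and feeding it into \eqref{degf} gives $\deg\mc{G}-\deg\mc{F}=(n-1)!\sum_{\alpha}\mathrm{Vol}(P_\alpha)\sum_{\lambda\ge-1}\bigl(d(\mc{G},\alpha,\lambda)-d(\mc{F},\alpha,\lambda)\bigr)\ge 0$. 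This is in the same spirit as, and in fact more carefully justified than, the paper's first step, which only asserts that ``generators of $\mc{F}$ are associated with bigger values of $\lambda$.''

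The strictness step, however, is a genuine gap, and the repair you sketch cannot be carried out. The difficulty you flag---that the strict inclusion $F^{\sigma}_{m_0}\subsetneq G^{\sigma}_{m_0}$ may be invisible at every ray---is not a technical nuisance but an actual obstruction. Take $\mc{G}$ to be any nonzero equivariant coherent subsheaf of $TX$ (even $TX$ itself) and $\mc{F}=\mf{m}_p\cdot\mc{G}$, where $\mf{m}_p$ is the ideal sheaf of the torus-fixed point $p$ of a maximal cone $\sigma$. Then $\mc{F}\subsetneq\mc{G}$ (Nakayama, using that $\mc{G}$ is torsion-free and hence $\mc{G}_p\neq 0$), both are equivariant subsheaves of $TX$ of the same rank, but $p\notin X_\alpha$ for every ray $\alpha$, so $\mc{F}|_{X_\alpha}=\mc{G}|_{X_\alpha}$, every difference $d(\mc{G},\alpha,\lambda)-d(\mc{F},\alpha,\lambda)$ vanishes, and $\deg\mc{F}=\deg\mc{G}$: the quotient $\mc{G}/\mc{F}$ is supported in codimension $\ge 2$, which $c_1$ does not detect. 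So no inductive descent through the faces of $\sigma$ using \eqref{char1}(b) and the $\eta_{\tau\sigma}$ can force the discrepancy down to a ray; the ray data simply do not see it.

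Note that the paper's own proof makes the same unjustified leap at exactly this point: it claims that some $\alpha_i\in\sigma\cap\Delta(1)$ must satisfy $e(\mc{F},\alpha_i,\langle\alpha_i,m\rangle)<e(\mc{G},\alpha_i,\langle\alpha_i,m\rangle)$, which the example above contradicts. What is true at this level of generality is only the weak inequality $\mu(\mc{F})\le\mu(\mc{G})$---the part you did prove---with strictness holding exactly when the inclusion is strict on some $X_\alpha$, i.e.\ when $\mc{G}/\mc{F}$ has a codimension-one component of support. Fortunately, the weak form is all that the later applications (Theorem \ref{HS} and Theorem \ref{detailed example of 4-fold}) actually use, so your argument, honestly stated as proving $\mu(\mc{F})\le\mu(\mc{G})$, is the serviceable conclusion; the strict inequality as stated should not be claimed.
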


\begin{proof}
For subsheaves  $\mc{F} \,\subseteq\, \mc{G}$  of $TX$, we have $F^{\alpha}_m \,\subseteq\, G^{\alpha}_m$.
 If, in addition, ${\rm rank}(\mc{F})\,=\, {\rm rank}(\mc{G})$, 
then  Lemma \ref{rank} implies that generators of $\mc{F}$ are associated with bigger values of $\lambda$.
Therefore, we have
$$   \sum_{\alpha\in \Delta(1), \lambda \in \mathbb{Z}} \lambda \, e(\mc{G}, \alpha, \lambda) \, {\rm Vol}(P_{\alpha})
\le \sum_{\rho\in \Delta(1), \lambda \in \mathbb{Z}} \lambda \, e(\mc{F}, \alpha, \lambda) \, {\rm Vol}(P_{\alpha}) \,.$$

Thus, when $\mc{F} \,\subseteq\, \mc{G} \,\subseteq\, TX$, and  ${\rm rank}(\mc{F}) \,=\, {\rm rank}(\mc{G})$, 
by  \eqref{degf},  we have 
\begin{equation}\label{degrank}
 \deg \,\mc{F} \,\le\, \deg \,\mc{G} \,.
 \end{equation}

Now, suppose $\mc{F}$ is a proper subsheaf of $\mc{G}$. Then there exists an $n$-dimensional cone $\sigma$ and an $m \,\in\,
 Gen(\widehat{G}^{\sigma})$ such that $F^{\sigma}_m \,\subsetneq\, G^{\sigma}_m$. Then  there exist at least one
$\alpha_i \,\in\, \sigma \cap \Delta(1)$ such that 
$e(\mc{F}, \alpha_i, \langle \alpha_i,m \rangle) < e(\mc{G}, \alpha_i, \langle \alpha_i,m \rangle)$. The lemma follows.
\end{proof}

\section{Hirzebruch surfaces}

In this section we study semi-stability of tangent bundle for smooth projective toric surface. Following lemma is very crucial in computing degree of rank $1$ subsheaves of the tangent bundle. 
\begin{lemma}\label{collection}  
To any equivariant rank one coherent subsheaf $\mc{F}$ of $TX$ on a complete nonsingular toric surface $X$, one can associate an integral 
vector \begin{equation}\label{coll1}
 \vec{\lambda}\,:=\,( \lambda_1, \,\cdots, \,\lambda_p)
 \end{equation}
  where  \begin{enumerate}
 \item $p=  |\Delta(1)|$,  and $\Delta(1) = \{ \alpha_1, \ldots, \alpha_p \}$
 \item $ e(\mc{F} , \alpha_i, \lambda_i) =1 $ for each $i$,
  \item each $ \lambda_i \in \mathbb{Z}_{\ge -1}$, and
 \item $ (\lambda_i, \lambda_j) \neq (-1,-1)$   if
 $ \alpha_i, \alpha_j $  form  a  cone in $\Delta$. 
 \end{enumerate}
  \end{lemma}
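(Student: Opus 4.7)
The plan is to define $\vec{\lambda}$ by taking $\lambda_i$ to be the unique exponent where $e(\mc{F}, \alpha_i, \cdot)$ is nonzero, and then verify the four listed properties, the substance lying in (4). Since $\mc{F}$ is torsion-free (as a subsheaf of $TX$), the multiplication maps \eqref{chim} are injective, so $\lambda \mapsto d(\mc{F}, \alpha_i, \lambda)$ is weakly increasing and each $e(\mc{F}, \alpha_i, \lambda)$ is a nonnegative integer. Lemma \ref{rank} then reduces to $\sum_{\lambda \ge -1} e(\mc{F}, \alpha_i, \lambda) = 1$, so exactly one $\lambda_i$ satisfies $e(\mc{F}, \alpha_i, \lambda_i) = 1$, and $\lambda_i \ge -1$ is forced by \eqref{lamb}. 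This disposes of (1)--(3).

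For (4), I would argue by contradiction. Suppose $\sigma = \langle \alpha_i, \alpha_j \rangle \in \Delta(2)$ has $\lambda_i = \lambda_j = -1$. Working in the dual basis $m_i^\sigma, m_j^\sigma$ of $M$ and the coordinates $z_i = \chi(m_i^\sigma)$, $z_j = \chi(m_j^\sigma)$ on $X_\sigma$, I would unwind $d(\mc{F}, \alpha_i, -1) = 1$ through the localization $F^{\alpha_i} = F^\sigma \otimes_{k[S_\sigma]} k[S_{\alpha_i}]$ and the isotypical decomposition \eqref{isod} to produce an integer $a \ge 0$ with $F^\sigma_{-m_i^\sigma + a m_j^\sigma} \neq 0$. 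By \eqref{char1}(a), the ambient space $E^\sigma_{-m_i^\sigma + a m_j^\sigma}$ is one-dimensional and spanned by $z_j^a \partial_{z_i}$, so the nonzero element of $F^\sigma_{-m_i^\sigma + a m_j^\sigma}$ is a nonzero scalar multiple of $z_j^a \partial_{z_i}$. The symmetric argument applied at $\alpha_j$ places a nonzero scalar multiple of $z_i^b \partial_{z_j}$ inside $F^\sigma$ for some $b \ge 0$. Tensoring with the fraction field of $k[z_i, z_j]$ then exhibits two linearly independent elements of $F^\sigma$, contradicting ${\rm rank}(\mc{F}) = 1$.

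The main obstacle is the combinatorial bookkeeping: translating the numerical statement $d(\mc{F}, \alpha_i, -1) = 1$ into nonvanishing of a specific graded piece $F^\sigma_m$ requires threading through the equivalence-class notation $[m] \in M_{\alpha_i}$, the localization from $\sigma$ down to the ray $\alpha_i$, and the injectivity of the multiplication maps. Once that dictionary is in place, the geometric crux reduces to the elementary fact that $\partial_{z_i}$ and $\partial_{z_j}$ are linearly independent at the generic point of $X$. The hypothesis that $\alpha_i, \alpha_j$ span an actual cone is essential here: without a common affine chart $X_\sigma$ the two putative generators do not live in a single $\sigma$-family, and the linear-independence argument cannot be staged.
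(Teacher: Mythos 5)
Your proposal is correct and follows essentially the same route as the paper: parts (1)--(3) come from Lemma \ref{rank} together with \eqref{lamb}, and part (4) is proved by locating, for a cone $\sigma$ spanned by $\alpha_i,\alpha_j$ with $\lambda_i=\lambda_j=-1$, generators of the form $z_j^{k}\partial_{z_i}$ and $z_i^{l}\partial_{z_j}$ in $F^{\sigma}$, whose independence at the generic point contradicts ${\rm rank}(\mc F)=1$. The localization/bookkeeping step you flag is exactly the step the paper asserts tersely, and your unwinding of it is sound.
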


\begin{proof} Given a rank one subsheaf $\mc{F}$ of $TX$, for each $\alpha_i \in \Delta(1)$, by  \eqref{rank2} and  \eqref{lamb}
there exists a unique
  $\lambda_i \in \mathbb{Z}_{\ge -1}$ such that $e(\mc{F} , \alpha_i, \lambda_i) =1 $.
  
  Now, suppose    $\alpha_i, \alpha_j$
  generate a cone $\sigma$.  Denote the characters $\chi(m^{\sigma}_i), \; \chi(m^{\sigma}_j)$ by $z_i$ and $z_j$ respectively. 
  Then $\lambda_i = -1$ implies that $F^{\sigma}_m \neq 0$ for some $m = -m^{\sigma}_i + k m^{\sigma}_j$ where $k \ge 0$. Therefore, 
  $F^{\sigma}$    has a generator of the form $z_j^k \partial_{z_i}$.  Similarly, 
  $F^{\sigma}$ has a generator of the form $z_i^l \partial_{z_j}$ if $\lambda_j =-1$. Thus rank$(\mc{F})$ would exceed one if 
  $(\lambda_i, \lambda_j)= (-1,-1)$. 
    \end{proof}

However, not every vector of type \eqref{coll1} corresponds to an equivariant rank one coherent subsheaf of $TX$.
 To illustrate this, we consider the example of a Hirzebruch surface.  The Hirzebruch surface $X= \mathbb{F}_m$ is 
 a projective toric surface which may be obtained by  the projectivization of the bundle $\mc{O}_{\mathbb{P}^{1}} \oplus
\mc{O}_{\mathbb{P}^{1}} (-m) $ on $\mathbb{P}^{1}$. 
    $\mathbb{F}_m$ has fan $\Delta$ with $\Delta(1)\,=\, \{ \alpha_1,\, \cdots, \,\alpha_4\}$ where
  $$ [ \alpha_1 \; \alpha_2 \; \alpha_3 \; \alpha_4 ] = \left[ \begin{array}{rrrr}  1 & 0 & -1 & 0 \\ 0 & 1 & m & -1 \end{array} \right] \,.$$
We denote the $2$-dimensional cone generated by $\alpha_i$ and  $ \alpha_{i+1({\rm mod }~4)} $ by $\sigma_i$.

Assume that $m \,\ge\, 1$. Consider a collection \eqref{coll1} such that $ \; \lambda_1 = \lambda_3 = -1 \,.$
Denote $$t_1= \chi(m^{\sigma_1}_1)= \chi((1,0)) , \quad t_2 = \chi(m^{\sigma_1}_2)=\chi((0,1)) \,.$$
\noindent 
As $\lambda_1 = -1$, there is an element $s_1$ of the form  $ t_2^k \partial_{t_1}$ in $F^{\sigma_1}$. Note that $m^{\sigma_2}_2 = (m,1) $ and 
$m^{\sigma_2}_3= (-1,0)$. (Here, it is convenient to continue to use the notation introduced in the proof of Lemma \ref{collection} as opposed to the last section.) Denote $$z_2= \chi(m^{\sigma_2}_2)=  t_1^m t_2, \quad z_3 = \chi(m^{\sigma_2}_3)= t_1^{-1}\,.$$
This implies that
 $$ t_1 = z_3^{-1}, \quad t_2 =   z_2 z_3^m  \,.$$ Therefore, we have the Jacobian $$\frac{\partial (t_1, t_2)}{ \partial (z_2, z_3)} =
  \left[ \begin{array}{rr}  0 & -z_3^{-2}  \\ z_3^m  & mz_2 z_3^{m-1}  \end{array} \right] = 
   \left[ \begin{array}{rr}  0 & -t_1^{2}  \\  t_1^{-m}  & m t_1 t_2  \end{array} \right]   \,.$$ 
   Thus we have 
$$  {\partial_{z_2} } =  t_1^{-m} \partial_{t_2}, \quad  {\partial_{z_3} } =  -t_1^{2}  \partial_{t_1} + m t_1 t_2 \partial_{t_2} \,.  $$  
Again, as $\lambda_3= -1 $, there must be an element $s_2$ in $F^{\sigma_2}$ of the form 
$ z_2^l \partial_{z_3} $.
The restrictions of $s_1$ and $s_2$ to the dense torus $X_{\{0\}}$ are independent. Therefore, $\mc{F}$ must have rank $2$, which is a contradiction. 
  
However, it can be verified that the vector  
 \begin{equation}\label{coll3} \vec{\lambda}  = (0,-1,0,-1) \, \end{equation}
corresponds to a rank one coherent subsheaf of $TX$ on every $\mathbb{F}_m$. An analogous, but more general
example is worked out in Theorem \ref{family}.

 \begin{theorem}\label{HS} The tangent bundle of the Hirzebruch surface $\mathbb{F}_m$ is unstable with respect to every polarization if $m\ge 2$. \end{theorem}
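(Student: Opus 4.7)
The plan is to exhibit a single equivariant rank-one coherent subsheaf $\mc{F}\subset TX$ whose slope strictly exceeds $\mu(TX)$ for every ample polarization $H$; Theorem \ref{stability} then converts equivariant instability into ordinary instability, which is what the theorem asks for. The natural candidate, motivated by the example worked out immediately before the statement, is the rank-one subsheaf associated to the admissible datum $\vec{\lambda}=(0,-1,0,-1)$, whose existence is asserted near \eqref{coll3}. Geometrically $\mc{F}$ is the relative tangent sheaf of the $\mathbb{P}^{1}$-fibration $\mathbb{F}_{m}\to\mathbb{P}^{1}$ obtained by contracting the rays $\alpha_{2},\alpha_{4}$.

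I would then compute both slopes using the degree formulas established in the previous section. Formula \eqref{degf} immediately gives $\deg\mc{F} = {\rm Vol}(P_{\alpha_{2}}) + {\rm Vol}(P_{\alpha_{4}})$, while \eqref{degtx} gives $\deg TX = \sum_{i=1}^{4}{\rm Vol}(P_{\alpha_{i}})$; substituting and using ${\rm rank}\,\mc{F}=1$, ${\rm rank}\,TX=2$ yields
\begin{equation*}
\mu(\mc{F}) - \mu(TX) \,=\, \tfrac{1}{2}\bigl({\rm Vol}(P_{\alpha_{2}}) + {\rm Vol}(P_{\alpha_{4}}) - {\rm Vol}(P_{\alpha_{1}}) - {\rm Vol}(P_{\alpha_{3}})\bigr).
\end{equation*}
To evaluate the four facet volumes, the cleanest route is to normalize $P$ by translating the defining vector of $H$ by $u = (-a_{1},-a_{2})\in M$; this preserves both the Picard class of $H$ and the lattice volumes of the facets, while placing $P$ as the trapezoid with vertices $(0,0), (a_{3},0), (a_{3}+ma_{4},a_{4}), (0,a_{4})$ and reducing ampleness to $a_{3},a_{4}>0$.

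Reading off the facet volumes from this normalized picture (with the slanted facet measured against its sublattice $S_{\alpha_{3}^{\perp}}$), one gets ${\rm Vol}(P_{\alpha_{1}}) = {\rm Vol}(P_{\alpha_{3}}) = a_{4}$, ${\rm Vol}(P_{\alpha_{2}}) = a_{3}$, and ${\rm Vol}(P_{\alpha_{4}}) = a_{3}+ma_{4}$, whence
\begin{equation*}
\mu(\mc{F}) - \mu(TX) \,=\, a_{3} + \tfrac{m-2}{2}\,a_{4} \,>\, 0 \qquad \text{for } m\ge 2.
\end{equation*}
This completes the argument up to the input we are allowed to take for granted. The subtlest step is the lattice length of the slanted facet $P_{\alpha_{3}}$, whose Euclidean length is $a_{4}\sqrt{1+m^{2}}$ but whose lattice length in $S_{\alpha_{3}^{\perp}}$ (generated by $(m,1)$) is $a_{4}$; keeping this normalization straight is where a careless calculation is most likely to err.
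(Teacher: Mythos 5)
Your proposal is correct and follows essentially the same route as the paper: the same destabilizing rank-one subsheaf with $\vec{\lambda}=(0,-1,0,-1)$ (existence taken from the discussion around \eqref{coll3}), the same degree formulas \eqref{degf}, \eqref{degtx}, and the same facet volumes, leading to the identical inequality $\mu(\mc{F})-\mu(TX)=a+\tfrac{m-2}{2}b>0$. The only deviation is your normalization $a_1=a_2=0$ by translating the polytope (i.e.\ replacing $H$ by a linearly equivalent divisor), which is legitimate since slopes depend only on the numerical class of $H$ and reproduces the paper's computation with $a=a_3$, $b=a_4$; your lattice-length bookkeeping for the slanted facet is also correct.
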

 
 \begin{proof}  Let $D_i$ denote the divisor of $X= \mathbb{F}_m$ corresponding to $\alpha_i$, $1\le i\le 4$. The $D_i$'s generate the Picard group of $X$. 
  It is easy to check that  $H = \sum a_i D_i $ is ample if and only if $a := a_1 + a_3 - m a_2 > 0$ and $b := a_2 + a_4 > 0$. The polytope $P$ associated to $H$ has vertices $A=(-a_1, -a_2)$, $ B=(a_3-ma_2, -a_2)$,   $C=(ma_4+a_3,a_4)$ and
   $D=(-a_1,a_4)$. The facets (edges) are $P_{\alpha_1}=AD $, $P_{\alpha_2} = AB$, $P_{\alpha_3}= BC$ and $P_{\alpha_4}= CD$. Their
   volumes are $b$, $a$, $b$ and $a+mb$ respectively. 
   
   The slope of a rank one coherent subsheaf $\mc{F} $ associated to the collection \eqref{coll1} is 
\begin{equation}\label{slopecoll}
 \mu(\mc{F}) =  \deg(\mc{F})=   - \,   \sum_{\alpha_i \in \Delta(1)}  \, \lambda_i {\rm Vol}(P_{\alpha_i} ) \,. 
 \end{equation}
 It follows that the slope is a decreasing function of each $\lambda_i$.   
   Since there is no rank one equivariant coherent subsheaves of $TX$ with $\lambda_1 =-1 = \lambda_3$,  the 
  rank one equivariant  coherent subsheaf with maximum slope corresponds to the collection \eqref{coll3}. The slope of this 
  subsheaf is
   $$ \mu(\mc{F}) \,=\, {\rm Vol}(P_{\alpha_2} ) + {\rm Vol}(P_{\alpha_4} ) \,=\, 2a +mb  \,. $$ 
      Note that the rank one subsheaves with $\vec{ \lambda} =( -1,0,0,0)  $, $(0,-1,0,0)$, $(0,0,-1,0)$  and $(0,0,0,-1)$, if they exist, have slopes $b$, $a$, $b$ and $a+mb$ rspectively; and all of them are less than $2a +mb$. 
   
  On the other hand, by \eqref{degtx},  the slope of $TX$ is 
  $$ \mu(TX) = \frac{1}{2} \sum_{i=1}^4 {\rm Vol}(P_{\alpha_i} ) =  a + \frac{m+2}{2} b \,.$$
  Since $a$ and $b$ are positive, the theorem follows.
   \end{proof}
   
   It is known that $T\mathbb{F}_0$ and $T\mathbb{F}_1$  are semistable with respect to the anti-canonical polarization, namely when 
   each $a_i =1$. We can deduce more about $\mathbb{F}_1$ from the above calculations. 
   
\begin{corollary}\label{HS2} For a polarization  with $ 2a <  b$, for example when $ 0< 2a_1 < a_4$ and $a_2=a_3= 0$, 
the tangent bundle $T\mathbb{F}_1$ is stable. On the other hand, if $2 a > b$, for example when $0< a_4 < 2a_1$ and $a_2=a_3= 0$, then
$T\mathbb{F}_1$ is unstable.
\end{corollary}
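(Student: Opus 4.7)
The plan is to specialize the slope calculations from the proof of Theorem \ref{HS} to $m = 1$ and directly compare $\mu(T\mathbb{F}_1)$ with the largest slope attainable by an equivariant rank one coherent subsheaf of $T\mathbb{F}_1$. The essential input, already secured in Theorem \ref{HS} and the example preceding it, is that (i) the admissible vectors $\vec{\lambda}$ of Lemma \ref{collection} for a Hirzebruch surface are further constrained by the obstruction $(\lambda_1,\lambda_3) \neq (-1,-1)$; (ii) among all admissible vectors, the one maximizing \eqref{slopecoll} is $\vec{\lambda} = (0,-1,0,-1)$; and (iii) this vector is realized by a genuine equivariant rank one subsheaf $\mc{F}_0 \subset T\mathbb{F}_1$.

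Specializing to $m = 1$, the facet volumes become $\text{Vol}(P_{\alpha_1}) = \text{Vol}(P_{\alpha_3}) = b$, $\text{Vol}(P_{\alpha_2}) = a$, and $\text{Vol}(P_{\alpha_4}) = a + b$, so by \eqref{degtx} one has $\mu(T\mathbb{F}_1) = a + \tfrac{3}{2}b$, while $\mu(\mc{F}_0) = 2a + b$ by \eqref{slopecoll}. The elementary equivalences $2a + b < a + \tfrac{3}{2}b \iff 2a < b$ and $2a + b > a + \tfrac{3}{2}b \iff 2a > b$ then yield the corollary. Indeed, when $2a < b$ every equivariant rank one subsheaf of $T\mathbb{F}_1$ has slope strictly below $\mu(T\mathbb{F}_1)$; since $T\mathbb{F}_1$ has rank two, every proper equivariant subsheaf is of rank one, so $T\mathbb{F}_1$ is equivariantly stable, hence stable by Theorem \ref{stability}. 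When $2a > b$, the proper subsheaf $\mc{F}_0$ strictly destabilizes $T\mathbb{F}_1$.

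Finally, I would verify the sample polarizations. Taking $a_2 = a_3 = 0$ yields $a = a_1$ and $b = a_4$, so the ampleness conditions $a > 0$ and $b > 0$ reduce to $a_1, a_4 > 0$, confirming that $H$ is ample; the inequality $2a_1 < a_4$ (respectively $a_4 < 2a_1$) is then exactly $2a < b$ (respectively $2a > b$). There is no real obstacle in this plan: all of the combinatorial work — including the existence of $\mc{F}_0$ and the non-realizability of $(\lambda_1,\lambda_3) = (-1,-1)$ — is already absorbed into the material preceding Theorem \ref{HS}, so the corollary is essentially an arithmetic consequence of specializing those numerics to $m = 1$.
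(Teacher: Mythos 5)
Your proposal is correct and follows essentially the same route as the paper: the corollary is obtained by specializing the computations in the proof of Theorem \ref{HS} to $m=1$, comparing $\mu(T\mathbb{F}_1)=a+\tfrac{3}{2}b$ with the maximal rank one equivariant slope $2a+b$ realized by $\vec{\lambda}=(0,-1,0,-1)$ (the pattern $\lambda_1=\lambda_3=-1$ being excluded by the Jacobian computation preceding Theorem \ref{HS}, valid for all $m\ge 1$), and invoking Theorem \ref{stability} for the passage from equivariant stability to stability. Your verification of the sample polarizations with $a_2=a_3=0$ is likewise consistent with the ampleness criterion $a,b>0$ used in the paper.
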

   
\section{Projective spaces}    
 
 The method of the previous section can be adapted to give an alternative proof of the well-known result that the tangent bundle of the complex projective space $\mathbb{P}^n$ is
 stable with respect to the anti-canonical polarization (cf. \cite[Section 1.4]{HL}).
  
\begin{theorem} \label{projective spaces}  $T\mathbb{P}^n$ is stable with respect to the anti-canonical polarization for every $n$.
\end{theorem}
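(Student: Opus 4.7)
The plan is to reduce to equivariant stability via Theorem \ref{stability}, then use the machinery of Section 5 together with the strong symmetry of the anti-canonical polarization to force a strict slope inequality. I may assume $n \ge 2$, since $T\mathbb{P}^1 \cong \mc{O}(2)$ is a line bundle with no proper nonzero subsheaves. For $H = -K_{\mathbb{P}^n} = \sum_{\alpha \in \Delta(1)} D_\alpha$ one has $a_\alpha = 1$ for every $\alpha$, so the polytope $P$ is a scaled standard simplex on which the symmetric group $S_{n+1}$ acts by permuting the $n+1$ rays. Hence all facets $P_\alpha$ share a common lattice volume $V$, and \eqref{degtx} gives
$$ \mu(T\mathbb{P}^n) \,=\, \frac{(n-1)!\,(n+1)\,V}{n}. $$

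Next, I let $\mc{F} \subset T\mathbb{P}^n$ be an equivariant proper coherent subsheaf of rank $r$, so $0 < r < n$. Set $S := \{\alpha \in \Delta(1) : e(\mc{F}, \alpha, -1) = 1\}$ and $s := |S|$; recall from the slope-bound lemma preceding Lemma \ref{compare} that $e(\mc{F}, \alpha, -1) \in \{0, 1\}$, since all such elements are multiples of a single $\partial_{z_\alpha}$ on the dense torus. By \eqref{degf} and \eqref{lamb}, every summand with $\lambda \ge 0$ is nonnegative, so
$$ \deg \mc{F} \,=\, -(n-1)!\,V \sum_{\alpha \in \Delta(1)} \sum_{\lambda \ge -1} \lambda\, e(\mc{F}, \alpha, \lambda) \,\le\, (n-1)!\,V\,s. $$
The crux is the combinatorial bound $s \le r$. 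For each maximal cone $\sigma$, a ray $\alpha_i^\sigma \in S$ forces an element of $F^\sigma$ of the form $z^{c_i}\partial_{z_i}$ with $c_i$ a nonnegative combination of $\{m_j^\sigma\}_{j \ne i}$, exactly as in the proof of Lemma \ref{collection}. On the dense torus $X_{\{0\}}$ these elements point in distinct basis directions $\partial_{z_1}, \ldots, \partial_{z_n}$, hence are linearly independent; since $\mc{F}|_{X_{\{0\}}}$ has rank $r$, the number of rays of $\sigma$ in $S$ is at most $r$. Because every maximal cone of $\mathbb{P}^n$ is the complement of exactly one ray, this gives $s - [\alpha \in S] \le r$ for every $\alpha \in \Delta(1)$. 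If $s \le n$, picking any $\alpha \notin S$ yields $s \le r$; if $s = n+1$, the constraint forces $r \ge n$, contradicting $r < n$. In either case, $s \le r$.

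Combining, $\mu(\mc{F}) \le (n-1)!\,V\,s/r \le (n-1)!\,V < (n-1)!\,V\,(n+1)/n = \mu(T\mathbb{P}^n)$, the strict inequality being automatic. So $T\mathbb{P}^n$ admits no destabilizing equivariant subsheaf, and Theorem \ref{stability} upgrades equivariant stability to stability. The main obstacle is the combinatorial step $s \le r$; the linear-independence argument on $X_{\{0\}}$ is the key input, and the complementary-ray structure of the simplex fan is what converts the local bound into a global one. Everything else is bookkeeping.
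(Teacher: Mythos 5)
Your proposal is correct and follows essentially the same route as the paper: symmetry of the anti-canonical polytope gives equal facet volumes, the degree formula \eqref{degf} bounds $\mu(\mc{F})$ by $(n-1)!\,V\,s/r$ with $s$ the number of rays having $e(\mc{F},\alpha,-1)=1$, and the key combinatorial claim $s\le r$ is proved exactly as in the paper by fitting $r+1$ such rays into one maximal cone (possible since $r+1\le n$) and noting the resulting generators $z^{c}\partial_{z_i}$ are independent on the dense torus. Your explicit case analysis via complements of rays and the explicit appeal to Theorem \ref{stability} only make the paper's implicit steps more precise.
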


\begin{proof} Let $\Delta$ be the fan of $\mathbb{P}^n$.  Then $\Delta(1) \,=\, \{\alpha_1, \,\cdots,\, \alpha_n,\, \alpha_{n+1}\}$ where
$$\{\alpha_1\,=\, (1,\, 0,\,\cdots,\, 0,\,0), \,\cdots,\, \alpha_n\,=\, (0,\,0,\,\cdots,\,0,\,1)\}$$ is the standard basis of $\mathbb{R}^n$ and 
 $$ \alpha_{n+1} \,=\, -\sum_i \alpha_i\,= \,(-1,\,-1,\, \cdots,\,-1)\,.$$
 $\Delta$ has $n+1$ cones of dimension $n$ which may be enumerated as 
 $$\sigma_i\,=\, \langle \alpha_1,\, \cdots ,\, \widehat{\alpha}_i,\, 
 \cdots,\, \alpha_{n+1} \rangle, \ \ 1 \,\le\, i \,\le\,  n+1 \,.$$
 Let $P$ denote the polytope of $\mathbb{P}^n$ with respect to the anti-canonical polarization.  For any pair $(i,j)$ there is an automorphism 
 $A_{i j}$ of the lattice $N$ that interchanges  $\alpha_i$ and $\alpha_j$ keeping the other $\alpha$'s fixed. This implies that  
  every facet $P_{\alpha_i}$ of $P$ has the same volume, say $V_n$.  Hence,  by \eqref{degf}, the slope of an equivariant rank $r$ coherent subsheaf $\mc{F}$ of the tangent bundle is 
\begin{equation}\label{muf} \mu(\mc{F}) =   - (n-1)! \, \frac{V_n}{r}  \sum_{\alpha\in \Delta(1), \lambda \in \mathbb{Z}_{\ge-1}} \lambda \, e(\mc{F}, \alpha, \lambda)  \,. 
\end{equation}

We claim that, if $r< n$, then there can be at most $r$ many $\alpha_i$'s with $e(\mc{F}, \alpha_i, -1) = 1 $. This would imply that $\mu(\mc{F}) \le 
  (n-1)! \, V_n $. 
However, by \eqref{degtx}
 $$\mu(T\mathbb{P}^n) = (n-1)! \frac{(n+1) V_n}{n} >   (n-1)! \, V_n  \,.$$  
 Thus $\mu({\mc{F}}) < \mu(T\mathbb{P}^n)$ for every proper sub sheaf $\mc{F}$ of $T\mathbb{P}^n$. 
 
To prove the claim, assume that there are at least $(r+1)$ many $\alpha_i$'s with $e(\mc{F}, \alpha_i, -1)\,=\, 1$. Since $r+1 \le n$, there exists 
an $n$-dimensional cone $\sigma$ containing $(r+1)$ of these $\alpha_i$'s. Note that the corresponding $(r+1)$ many $\partial_{z^{\sigma}_i}$'s (up 
to multiplication by characters) are all generators of $F^{\sigma}$, contradicting the fact that ${\rm rank}(\mc{F})\,=\, r$.
\end{proof}

\section{Unstable Fano examples in higher dimensions}  

If $X$ is a Fano toric variety, then the polytope $P$ corresponding to the anti-canonical polarization  is a reflexive polytope (cf. \cite{Bat}). This implies that $P$ has integral vertices and the origin is the unique integral point which is in the interior of $P$.
When $X$ is a surface, it is easy to tabulate such polytopes. One easily checks that the tangent bundle of a nonsingular Fano toric surface is semistable with respect to the anti--canonical  polarization. However, it is known that the tangent bundle of any nonsingular Fano surface is semistable with respect to the anti--canonical polarization (cf. \cite{ Fah}). 
 
The product of two Fano varieties $X_1 \times X_2 $ is  Fano. If $TX_1$ and $TX_2$ are both semistable with respect to their
respective  anti--canonical polarizations, then the same holds for $T(X_1 \times X_2)$ (cf. \cite{Stef}). This yields more examples of semistable 
Fano toric varieties. However there are a lot of Fano toric varieties with unstable tangent bundle in higher dimensions, as the following result shows. For $n=3$, the result below had appeared in \cite{Stef}.

\begin{theorem}\label{family} Suppose $X$ is the Fano toric variety  
$\mathbb{P}( \mc{O}_{\mathbb{P}^{n-1}} \oplus \mc{O}_{ \mathbb{P}^{n-1} }(m) )$ where $0 < m \le n-1$ and $n \ge 3$. Then $TX$ is unstable with respect to the anti-canonical polarization.  
 \end{theorem}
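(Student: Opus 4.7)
The plan is to mimic the Hirzebruch-surface argument of Theorem~\ref{HS} by exhibiting the relative tangent bundle $\mc{F} \,:=\, T_{X/\mathbb{P}^{n-1}}$ of the $\mathbb{P}^1$-fibration $\pi \colon X \to \mathbb{P}^{n-1}$ as a destabilizing equivariant rank--one subsheaf of $TX$. The fan of $X$ has $n+2$ rays
$$\alpha_i \,=\, e_i\ (1 \le i \le n),\quad \alpha_{n+1} \,=\, -e_1 - \cdots - e_{n-1} + m e_n,\quad \alpha_{n+2} \,=\, -e_n,$$
with each maximal cone spanned by $n-1$ of the ``base'' rays $\{\alpha_1,\ldots,\alpha_{n-1},\alpha_{n+1}\}$ together with exactly one of the ``fiber'' rays $\{\alpha_n,\alpha_{n+2}\}$. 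On the chart $X_\sigma$ with $\sigma \,=\, \langle \alpha_1,\ldots,\alpha_n\rangle$ and coordinates $z_i \,=\, \chi(e_i^*)$, the line bundle $\mc{F}$ is generated by $\partial_{z_n}$ (of $T$-weight $-e_n^*$); on the opposite chart $X_\tau$ with $\tau \,=\, \langle \alpha_1,\ldots,\alpha_{n-1},\alpha_{n+2}\rangle$, the generator is $\partial_{z_n^\tau} \,=\, -z_n^{2}\,\partial_{z_n}$, of weight $e_n^*$. Since $\mc{F}$ is the kernel of the equivariant surjection $TX \twoheadrightarrow \pi^* T\mathbb{P}^{n-1}$, it is automatically a well-defined equivariant line subsheaf of $TX$, and the two generators let us read off from Definition~\ref{de} that $\lambda_n \,=\, \lambda_{n+2} \,=\, -1$ and $\lambda_i \,=\, 0$ for every base ray.

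The next step is to compute the facet volumes of the anti-canonical polytope
$$P \,=\, \{(x,y) \in \mathbb{R}^{n-1}\times \mathbb{R} : x_i \ge -1,\ \textstyle\sum_i x_i - m y \le 1,\ -1 \le y \le 1\}.$$
The slices $y=-1$ and $y=1$ are standard $(n-1)$-simplices of edge lengths $n-m$ and $n+m$, giving ${\rm Vol}(P_{\alpha_n}) = (n-m)^{n-1}/(n-1)!$ and ${\rm Vol}(P_{\alpha_{n+2}}) = (n+m)^{n-1}/(n-1)!$. For $1 \le i \le n-1$, fixing $x_i=-1$ and fibering over $y$ yields
$${\rm Vol}(P_{\alpha_i}) \,=\, \int_{-1}^{1}\frac{(n+my)^{n-2}}{(n-2)!}\,dy \,=\, \frac{(n+m)^{n-1} - (n-m)^{n-1}}{m (n-1)!}.$$
For $P_{\alpha_{n+1}}$, I invoke the lattice involution of $N$ sending $e_1 \mapsto \alpha_{n+1}$ and fixing $e_2,\ldots,e_n$; a direct check shows this is an element of ${\rm GL}(N)$ swapping $\alpha_1 \leftrightarrow \alpha_{n+1}$ and fixing the other rays, and its dual permutes the anti-canonical polytope bijectively sending $P_{\alpha_1}$ to $P_{\alpha_{n+1}}$. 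Substituting into \eqref{degtx} and \eqref{degf} then gives
$$\mu(\mc{F}) \,=\, (n-m)^{n-1} + (n+m)^{n-1}, \qquad \mu(TX) \,=\, \frac{(n+m)^n - (n-m)^n}{n m}.$$

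The destabilization $\mu(\mc{F}) > \mu(TX)$ reduces, after clearing denominators, to
$$(n-m)^{n-1}(nm + n - m) \,>\, (n+m)^{n-1}(n + m - nm).$$
When $m \ge 2$, the factor $n + m - nm \,=\, n - m(n-1) \,\le\, 2 - n \,<\, 0$ (using $m \le n-1$ and $n \ge 3$), while the left side is manifestly positive, so the inequality is automatic. The only nontrivial case is $m=1$, where one must show $(n-1)^{n-1}(2n-1) \,>\, (n+1)^{n-1}$, equivalently $\bigl(1+\tfrac{2}{n-1}\bigr)^{n-1} \,<\, 2n-1$; since the sequence $(1+2/k)^k$ is monotone increasing with limit $e^2 < 7.4$, the bound $2n-1 \ge 9$ handles all $n \ge 5$, and the residual cases $n=3,4$ are immediate numerical checks ($4<5$ and $125/27<7$). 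The main technical content of the proof is organizing the facet volume computations---particularly the symmetry argument for $P_{\alpha_{n+1}}$, which avoids a direct integration over a tilted hyperplane---together with the short character-theoretic verification that the line bundle $T_{X/\mathbb{P}^{n-1}}$ genuinely carries the combinatorial data $(\lambda_i)$ claimed above.
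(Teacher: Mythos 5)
Your proposal follows the same overall strategy as the paper: both destabilize $TX$ by the same equivariant rank--one subsheaf (the relative tangent sheaf of the $\mathbb{P}^1$--fibration, with $\lambda=-1$ on the two fiber rays and $\lambda=0$ on the base rays) and both compare slopes through \eqref{degf} and \eqref{degtx}. Within that strategy you diverge in two worthwhile ways. First, you get existence and equivariance of $\mc{F}$ for free by realizing it as $\ker\bigl(TX \to \pi^{*}T\mathbb{P}^{n-1}\bigr)$, whereas the paper builds the sheaf chart by chart via the transition maps. Second, your facet volumes are the exact ones: the integral $\int_{-1}^{1}(n+my)^{n-2}\,dy/(n-2)!$ gives $\bigl((n+m)^{n-1}-(n-m)^{n-1}\bigr)/\bigl(m\,(n-1)!\bigr)$ for each side facet, and your lattice involution legitimately transports $P_{\alpha_1}$ to the tilted facet. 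The paper instead treats each side facet as a prism of height $2$ and uses $\bigl((n+m)^{n-2}+(n-m)^{n-2}\bigr)/(n-2)!$, which agrees with the exact value only for $n=3$ and overestimates it for $n\ge 4$ (the side facet is a frustum, not a prism); this only inflates $\mu(TX)$, so the paper's strict inequality still yields instability, but your $\mu(TX)=\bigl((n+m)^{n}-(n-m)^{n}\bigr)/(nm)$ is the correct value, consistent with $(-K_X)^{n}=n!\,\mathrm{Vol}(P)$. Your closing inequality, split into $m\ge 2$ and $m=1$ with the bound $(1+2/k)^{k}<e^{2}$, checks out; the paper instead reduces its version to $(a^{n-2}-b^{n-2})(a-b)>0$.

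One step needs to be filled in. You read off the data $(\lambda_i)$ from the two charts $\sigma=\langle\alpha_1,\dots,\alpha_n\rangle$ and $\tau=\langle\alpha_1,\dots,\alpha_{n-1},\alpha_{n+2}\rangle$, but neither contains the tilted ray $\alpha_{n+1}$, so they do not determine $e(\mc{F},\alpha_{n+1},\cdot)$ (Definition \ref{de} requires a chart containing that ray), and \eqref{degf} needs $\lambda_{n+1}=0$: a positive value there would lower $\deg\mc{F}$. The claim is true, and this is precisely where the twist $m$ enters (and where the paper spends most of its verification). A uniform fix: for any maximal cone, each dual vector attached to a base ray annihilates $e_n$ (it annihilates the fiber ray $\pm e_n$ of that cone), hence is pulled back from the base; so the chart splits as (base chart)$\times\mathbb{A}^1$ with fiber coordinate $\chi(m^{\sigma}_{\gamma})$, $\gamma$ the fiber ray, and $\ker d\pi$ is generated there by the derivative in that coordinate, whose weight $-m^{\sigma}_{\gamma}$ pairs to $0$ with every base ray of the cone and to $-1$ with $\gamma$. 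Applied to a cone containing $\alpha_{n+1}$ (where the fiber coordinate is, e.g., $z_1^{m}z_n$) this gives $\lambda_{n+1}=0$; alternatively, the relative Euler sequence gives $\mc{F}\cong\mc{O}_X(D_{\alpha_n}+D_{\alpha_{n+2}})$, from which $\deg\mc{F}$ follows without computing any $\lambda$. With that supplied, your proof is complete.
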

 
 \begin{proof} Let $\Delta$ be the fan of $X$.  Then $\Delta(1) \,=\, \{\alpha_1,\, \cdots,\, \alpha_n, \,\alpha_{n+1},\, \alpha_{n+2}\}$, where
  $\{\alpha_1\,=\, (1,\, 0,\,\cdots,\, 0,\,0),\, \cdots, \,\alpha_n\,=\, (0,\,0,\,\cdots,\,0,\,1)\}$ is the standard basis of $\mathbb{R}^n$ and 
 $$ \alpha_{n+1} = - \alpha_n, \quad  \alpha_{n+2}= -\sum_{j=1}^n \alpha_j + (m+1)\alpha_n \,=\, (-1,\,-1,\, \cdots,\,-1,\, m)\,.$$
 $\Delta$ has $2n$ cones of dimension $n$. They may be divided into two groups, upper and lower. The upper cones contain $\alpha_n$
but not $\alpha_{n+1}$, and vice versa for the lower cones. There are $n$ upper cones, each missing one of the remaining $\alpha_i$'s, 
and similarly for lower cones.

We verify that there is an equivariant subsheaf $\mc{F}$ of $TX$ of rank $1$ associated to the vector
$$\vec{\lambda} \,=\, (0,\, \cdots,\, 0,\, -1,\,-1)  \, ,$$
i.e. $ \lambda_n = \lambda_{n+1} = -1, \lambda_j= 0 \; {\rm  for\; other \,} j $ (see Lemma \ref{collection}).
Note that for this $\vec{\lambda}$, in every $n$--dimensional cone $\sigma$ there is exactly one $\alpha_i$,  say $\alpha_{i_0}$,  such  that $e(\mc{F}, \alpha_i, -1) \neq 0$. The number $i_0$ is $n$ for upper cones, and $n+1$  for lower cones.

Consider the upper cone  $\sigma$ generated by $\alpha_1,\, \cdots,\, \alpha_n$. Denote $\chi(m^{\sigma}_i)$ by $t_i$ for $1\,\le\, i\,\le\, n$.
Then $\lambda_n = - 1$ implies that $F^{\sigma}$ is generated by an element of the form $g(t_1,\, \cdots,\,t_{n-1})\partial_{t_n}$. 
Set $g(t_1,\, \cdots,\, t_{n-1}) \,=\,1$.

Now consider any other upper cone  $\tau$. Assume $\tau$ is missing the ray $\alpha_j$, where $j \le n$.  
Let $z^{\tau}_i := \chi(m^{\tau}_i)$ where $1\le i \le n+2$ and $i \neq j, n+1 $.
Then  $F^{\tau}$ is generated by an element of the form $f(z) \partial_{z^{\tau}_n}$. 
We have 
$$z^{\tau}_i = \left\{  \begin{array}{ll} {t_i }{t_j}^{-1} & {\rm if }\;   i \neq n, n+2 \,, \\ 
t_j^m t_n & {\rm if }\; i=n \,, \\
{t_j^{-1}} & {\rm if }\; i=n + 2 \,.
\end{array} \right. $$
It follows easily that $ \partial_{t_n} = t_j^m \partial_{z^{\tau}_n} = (z^{\tau}_{n+2})^{-m}\partial_{z^{\tau}_n} $. 
Note that $\lambda_n =-1$ means $F^{\tau}$ should be generated by an 
element of the form $h(z^{\tau}_1, \,\cdots,\, \widehat{z^{\tau}_n},\, \cdots,\, z^{\tau}_{n+2} ) \partial_{z^{\tau}_n}$.
Set $h \,=\,(z^{\tau}_{n+2})^{-m}$.

The calculations for the lower cones are quite analogous. Finally, it is enough to consider the transition between the upper cone 
$\sigma$ and the lower cone $\delta$ generated by $\alpha_1, \,\cdots, \,\alpha_{n-1}, \,\alpha_{n+1}$. The coordinates on $X_{\delta}$
are $t_1, \,\cdots,\, t_{n-1}$ and $w\,=\,\chi(m^{\delta}_{n+1})\,= \,t_n^{-1}$. As $\lambda_{n+1}\,=\, -1$, we assume that $F^{\delta}$ is generated by 
$\partial_w$. Note that $\partial_w = -t_n^2 \partial_{t_n}$ and $t_n$ is invertible on $X_{\sigma} \cap X_{\delta}$. This confirms the existence of the desired rank one sheaf $\mc{F}$.
 
Consider the anti--canonical divisor $H$ on $X$. The associated polytope is combinatorially equivalent to a prism $S^{n-1} \times I $ 
where $S^{n-1}, \, I$ denote a simplex of dimension $n-1$ and an interval respectively. The top and bottom facets, $P_{\alpha_{n+1}}$ and $P_{\alpha_n}$, are 
$(n-1)$--dimensional simplices and lie on the hyperplanes $x_n=1$ and $x_n=-1$ respectively.   The top facet $P_{\alpha_{n+1}}$ has the vertices 
$$\begin{array} {l} (-1, -1, \cdots, -1,1), \; (n+m-1, -1, \cdots, -1,1), \;  (-1, n+m-1, \cdots, -1,1),   \\ 
\cdots,  (-1, -1, \cdots, n+m -1,1) \,.\end{array}$$
The bottom facet $P_{\alpha_n}$ has the vertices 
$$\begin{array} {l} (-1, -1, \cdots, -1,-1), \; (n-m-1, -1, \cdots, -1,-1), \;  (-1, n-m-1, \cdots, -1,-1),   \\ 
\cdots,  (-1, -1, \cdots, n-m -1,-1) \,.\end{array}$$
We have
$${\rm Vol}(P_{\alpha_{n+1}}) = \frac{(n+m)^{n-1}}{(n-1)!} \quad {\rm and} \quad  {\rm Vol}(P_{\alpha_n}) = \frac{(n-m)^{n-1}}{(n-1)!} \,.  $$
Consequently, using \eqref{degf}, we have, 
$$ \mu(\mc{F}) = (n+m)^{n-1} + (n-m)^{n-1} \,. $$

The prism $P$ has $n$ side facets corresponding to the rays $\alpha_1, \,\cdots,\, \alpha_{n-1}$, and $ \alpha_{n+2}$. Each side facet, in turn, is a prism of height $2$ bounded 
by a facet of $P_{\alpha_{n+1}}$ and $P_{\alpha_n}$ on top and bottom respectively. Each of these side facets have volume 
$$ \frac{ (n+m)^{n-2} + (n-m)^{n-2}}{(n-2)!}\,.$$
Therefore, using \eqref{degtx}, we have, 
$$ \mu(TX) = \frac{(n+m)^{n-1} + (n-m)^{n-1} + n(n-1) ((n+m)^{n-2} + (n-m)^{n-2}) }{n} \,. $$
It is now easy to verify  $\mu(\mc{F}) > \mu(TX)$ using  $(a^{n-2} -b^{n-2}) (a-b) > 0$ where 
$a= n+m$ and $b= n-m$. Here, we have used $n \ge 3$.
The theorem follows.
\end{proof}
 
\section{Fano toric $4$--folds with small Picard number}
 
Steffens has studied the stability of the tangent bundle of all Fano $3$-folds in \cite{Stef}. Moreover, Nakagawa \cite{Nak1, Nak2} has identified 
all Fano toric $4$-folds that admit  Einstein-K\"ahler metrics. We study which Fano toric $4$-folds
with Picard number $\leq 2$  have semi-stable tangent bundle.

A list of Fano toric $4$--folds is given by Batyrev in \cite{Bat2},
see also earlier work of Kleinschmidt \cite{Klei}. There are 10 classes of these varieties with Picard number $\leq 2$, which are:
\begin{multicols}{2}	
\begin{enumerate}
	\item $\mathbb{P}^{4}$
	\item $B_1 = \mathbb{P}(\mc{O}_{\mathbb{P}^{3}} \oplus \mc{O}_{\mathbb{P}^{3}}(3)) $
	\item $B_2 = \mathbb{P}(\mc{O}_{\mathbb{P}^{3}}\oplus \mc{O}_{\mathbb{P}^{3}}(2)) $ 
	\item $B_3 = \mathbb{P}(\mc{O}_{\mathbb{P}^{3}}\oplus \mc{O}_{\mathbb{P}^{3}}(1)) $
	\item $B_4 = \mathbb{P}^{1} \times \mathbb{P}^{3}$
	\item $B_5 = \mathbb{P}(\mc{O}_{\mathbb{P}^{1}}^{3} \oplus \mc{O}_{\mathbb{P}^{1}}(1)) $
	\item $C_1 = \mathbb{P}(\mc{O}_{\mathbb{P}^{2}}^{2} \oplus \mc{O}_{\mathbb{P}^{2}}(2)) $
	\item $C_2 = \mathbb{P}(\mc{O}_{\mathbb{P}^{2}}^{2} \oplus \mc{O}_{\mathbb{P}^{2}}(1)) $
	\item $C_3 = \mathbb{P}(\mc{O}_{\mathbb{P}^{2}} \oplus \mc{O}_{\mathbb{P}^{2}}^{2}(1)) $
	\item $C_4 = \mathbb{P}^{2} \times \mathbb{P}^{2}$
\end{enumerate}
 \end{multicols}
 However, we will first need to fine-tune our tools to deal with subsheaves of higher rank. We will describe a 
generalization of Lemma \ref{collection} for rank $r$ subsheaves. 
 
 \begin{lemma}\label{matrix} To any equivariant rank $r$ coherent subsheaf $\mc{F}$ of $TX$ on a complete nonsingular toric variety $X$,
  one can associate a unique  $r \times p$ matrix of integers 
 $ \Lambda_{r \times p} := ( \lambda_{ij}) \, ,  $
  where \begin{enumerate} 
  \item $p=  |\Delta(1)|$, 
   \item $ e(\mc{F} , \alpha_j, \lambda_{ij}) \neq 0 $ for each $1 \leq j \leq p$,  
  \item   $-1 \,\leq\, \lambda_{1j} \,\leq\, \lambda_{2j} \,\leq\, \cdots \,\leq\, \lambda_{rj}$ for every $j$, 
  \item $ \sum  e(\mc{F} , \alpha_j, \lambda_{ij}) = r $ for every $j$, where the sum is over any maximal set of row indices $i$ such that
  corresponding $\lambda_{ij}$'s in the column 
  $j$ are distinct.
  \item $ (\lambda_{ij_1}, \,\cdots,\, \lambda_{ij_{r+1}} ) \,\neq\, (-1,\cdots,-1)$ if
 $ \alpha_{j_1},\, \cdots,\, \alpha_{j_{r+1}}$  form  a  cone in $\Delta$,  
 and
 \item $(\lambda_{1j},  \lambda_{2j}) \neq (-1,-1)$ for any $j$.
 \end{enumerate} \end{lemma}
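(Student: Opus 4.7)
The plan is to build the matrix $\Lambda$ from the numerical data $e(\mc{F}, \alpha_j, \lambda)$ and then verify conditions (5) and (6) by direct dimension counts in $TX$. For each ray $\alpha_j\in\Delta(1)$, Lemma \ref{rank} gives $\sum_{\lambda\ge -1}e(\mc{F},\alpha_j,\lambda)=r$, so I would define $(\lambda_{1j},\ldots,\lambda_{rj})$ to be the unique weakly increasing sequence that lists each $\lambda$ with multiplicity $e(\mc{F},\alpha_j,\lambda)$. Uniqueness and conditions (1)--(4) then hold automatically; the lower bound $\lambda_{ij}\ge -1$ in (3) is exactly \eqref{lamb}.

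For (6), I would show $e(\mc{F}, \alpha_j, -1)\le e(TX,\alpha_j,-1)=1$ by a weight computation in $TX$. Fix any top-dimensional cone $\sigma$ containing $\alpha_j$ and relabel generators so that $\alpha_j=\alpha^{\sigma}_1$. Part (a) of \eqref{char2} gives $\dim E^{\alpha_j}_m=|\{i:-m^\sigma_i\le_{\alpha_j}m\}|$; when $\langle m,\alpha_j\rangle=-1$ this forces $i=1$ uniquely, so $\dim E^{\alpha_j}_m=1$, and when $\langle m',\alpha_j\rangle=-2$ no $i$ works, so $\dim E^{\alpha_j}_{m'}=0$. Hence $e(\mc{F},\alpha_j,-1)=\dim F^{\alpha_j}_m\le 1$, which is exactly (6).

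For (5), I would argue by contradiction, adapting the argument in Lemma \ref{collection}. Assume $\alpha_{j_1},\ldots,\alpha_{j_{r+1}}$ generate a cone in $\Delta$ and $\lambda_{i,j_k}=-1$ for all $k$ and some row $i$; by (6) this forces $i=1$, so $e(\mc{F},\alpha_{j_k},-1)=1$ for every $k$. Completeness and nonsingularity of the fan let me embed the cone generated by $\alpha_{j_1},\ldots,\alpha_{j_{r+1}}$ as a face of some top-dimensional $\sigma$; identify $\alpha_{j_k}=\alpha^\sigma_{i_k}$. By the weight count used for (6), for each $k$ the space $E^\sigma_{m}$ at any weight $m$ with $\langle m,\alpha_{j_k}\rangle=-1$ and $m+m^\sigma_{i_k}\in S_\sigma$ is one-dimensional, spanned by a monomial multiple of $\partial_{z_{i_k}}$. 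From $d(\mc{F},\alpha_{j_k},-1)=1$ I would lift a generator of $F^{\alpha_{j_k}}_{[m_k]}$ (after a suitable shift by an element of $S_{\alpha_{j_k}^\perp}$ into the dual cone of $\sigma$) to an element $s_k\in F^\sigma$, which must therefore be a nonzero scalar multiple of a monomial times $\partial_{z_{i_k}}$. Restricting $s_1,\ldots,s_{r+1}$ to the dense torus orbit $X_{\{0\}}$ produces nonzero scalar multiples of the distinct basis vectors $\partial_{z_{i_1}},\ldots,\partial_{z_{i_{r+1}}}$ of $TX|_{X_{\{0\}}}$, which are linearly independent over the function field. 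This forces $\mathrm{rank}(\mc{F})\ge r+1$, contradicting $\mathrm{rank}(\mc{F})=r$.

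The main obstacle I anticipate is the lifting step in (5): one must move a nonzero class in $F^{\alpha_{j_k}}_{[m_k]}$ back to a genuine element of $F^\sigma$ whose local expression in the basis $\{\partial_{z_1},\ldots,\partial_{z_n}\}$ is pinned down as a pure monomial multiple of $\partial_{z_{i_k}}$, requiring a careful choice of representative weight inside the class $[m_k]$. Once this lifting is done, linear independence on $X_{\{0\}}$ is immediate. The remaining items (1)--(4) and (6) fall out directly from Lemma \ref{rank} and the explicit weight description of $TX$ along a ray given by part (a) of \eqref{char2}.
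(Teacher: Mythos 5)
Your proposal is correct and follows essentially the same route as the paper: build each column of $\Lambda$ by listing the values $\lambda$ with multiplicity $e(\mc{F},\alpha_j,\lambda)$ (so (1)--(4) follow from Lemma \ref{rank} and \eqref{lamb}), get (6) from $d(\mc{F},\alpha_j,-1)\le d(TX,\alpha_j,-1)=1$, and get (5) by the same generator-independence argument on the dense torus as in Lemma \ref{collection}. You simply spell out the lifting and weight-space details that the paper's terse proof leaves implicit, and your treatment of them is sound.
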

 
 \begin{proof} We are tabulating which $ e(\mc{F} , \alpha_j, \lambda) \neq 0 $ such that each entry $\lambda_{ij}$
 in the  $j$--th column  contributes $1$ to $e(\mc{F}, \alpha_j, \lambda_{ij})$.   This implies that if $ e(\mc{F} , \alpha_j, \lambda) = k $, then $k$ entries of the $j$--th column have entry $\lambda$.  
 
 Then the proof is almost immediate.  Condition $(3)$ is a choice of order made for the sake of uniqueness. Condition $(4)$ follows from 
 Theorem \ref{rank}. Condition $(5)$ ensures that rank of $\mc{F}$ does not exceed $r$.  Condition $(6)$ follows from the dependence of 
 relevant generators over $k[S_{\{0\}}]$.  It is equivalent to saying $e(\mc{F} , \alpha_j, -1) \leq 1$ for each $j$.   
  \end{proof}
 
 We use the associated matrix $\Lambda$ below to give a proof of the semi-stability of the toric Fano $4$-fold 
  $B_5 = \mathbb{P}(\mc{O}_{\mathbb{P}^{1}}^{3} \oplus \mc{O}_{\mathbb{P}^{1}}(1)) $ from the above list.
  
\begin{theorem}\label{detailed example of 4-fold}
Suppose $X$ is the Fano toric $4$--fold $\mathbb{P}(\mc{O}_{\mathbb{P}^{1}}^{3} \oplus \mc{O}_{\mathbb{P}^{1}}(1))  $. 
Then $TX$ is strictly semistable with respect to the anti-canonical polarization.
\end{theorem}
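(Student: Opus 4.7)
The plan is to apply the equivariant framework of Sections~2--5: by Theorem~\ref{stability} it suffices to rule out every proper $T$-equivariant subsheaf $\mc{F}\subset TX$ with $\mu(\mc{F})>\mu(TX)$. First I would write down the fan $\Delta$ of $X=\mathbb{P}(\mc{O}_{\mathbb{P}^{1}}^{3}\oplus\mc{O}_{\mathbb{P}^{1}}(1))$ explicitly. It has six rays $\alpha_1,\ldots,\alpha_6$ (four from the fiber $\mathbb{P}^3$ and two from the base $\mathbb{P}^1$, with one ray twisted to encode the $\mc{O}(1)$ summand) and eight maximal cones of dimension four. Using this description I would compute the six facet volumes $V_i:=\mathrm{Vol}(P_{\alpha_i})$ of the anti-canonical polytope $P$, exploiting the $S_3$-symmetry permuting the three trivial fiber directions, and then $\mu(TX)$ via \eqref{degtx}.

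The core of the argument is to enumerate, for each rank $r\in\{1,2,3\}$, the integer matrices $\Lambda=(\lambda_{ij})_{r\times 6}$ attached by Lemma~\ref{matrix} to a hypothetical rank-$r$ equivariant subsheaf $\mc{F}\subset TX$. Since by \eqref{degf} the slope $\mu(\mc{F})$ is a strictly decreasing function of each entry $\lambda_{ij}$, with coefficient proportional to $V_j$, the candidates that could possibly destabilize $TX$ are those that concentrate the maximum admissible number of $-1$'s in the columns carrying the largest facet volumes. The constraints (3)--(6) of Lemma~\ref{matrix} -- in particular that no two rays spanning a $2$-cone may simultaneously carry $-1$ in the same row, and that no $(r+1)$-tuple of rays spanning an $(r+1)$-cone may all carry $-1$ in their respective columns -- cut the enumeration down sharply. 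For each surviving candidate I would plug into \eqref{degf} and compare with $\mu(TX)$; I expect all but a short list of candidates to drop out immediately.

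For each remaining candidate $\Lambda$ with formal slope $\ge \mu(TX)$, I would run a transition-map computation of the type carried out in Theorem~\ref{family} and in the discussion preceding it: on each maximal cone $\sigma$ one writes down the generators of $F^\sigma$ of the form $z^a\partial_{z_i}$ forced by the columns of $\Lambda$, then checks, via the Jacobian of the change of coordinates to an adjacent maximal cone $\tau$, whether these sections extend consistently to $F^\tau$. In the dangerous cases this compatibility will fail by producing an extra $\mc{O}_{X_{\{0\}}}$-independent generator on the dense torus, forcing $\mathrm{rank}(\mc{F})>r$ and giving the required contradiction. The principal obstacle is the combinatorial bookkeeping: with six rays and two natural classes of maximal cones (those containing the twisted ray and those that do not), the rank-$2$ and rank-$3$ analyses involve several matrix shapes, and for each one must select the right adjacent pair of maximal cones to expose the superfluous generator. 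Once all such candidates are excluded, semistability of $TX$ follows.
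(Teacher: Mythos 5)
Your plan is essentially the paper's proof: reduce to equivariant subsheaves via Theorem~\ref{stability}, list the six rays and eight maximal cones, compute the facet volumes of the anti-canonical polytope and $\mu(TX)$ from \eqref{degtx} (one gets $\mu(TX)=128$), enumerate candidate matrices $\Lambda$ via Lemma~\ref{matrix} for ranks $1,2,3$, bound slopes by \eqref{degf}, and use coordinate/Jacobian transition computations as in Theorem~\ref{family} for the borderline candidates. One caveat, though: your closing step, ``exclude all candidates with formal slope $\ge \mu(TX)$ by showing the transition data forces rank $>r$,'' cannot be carried out as stated, and conflates what semistability requires with what stability would require. In this example the extremal rank~$3$ candidate ($\lambda_{1j}=-1$ for $j=1,\dots,4$, all other entries $0$) has slope exactly $128=\mu(TX)$ and the corresponding subsheaf genuinely exists (the paper verifies this; it is why $TB_5$ is semistable but not stable), so any attempt to ``expose a superfluous generator'' for it must fail. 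What actually happens, and what you need to check, is that no candidate has slope strictly greater than $\mu(TX)$: the best rank~$1$ vector $(0,0,0,0,-1,-1)$ gives exactly $128$ (and in fact does not correspond to a subsheaf, as a transition computation between the cones containing $\alpha_5$ and those containing $\alpha_6$ shows $\partial_{t_4}$ and $t_1t_4\partial_{t_1}-t_4^2\partial_{t_4}$ are independent on the dense torus), rank~$2$ candidates are bounded by $120<128$ using conditions (5) and (6) of Lemma~\ref{matrix}, and rank~$3$ tops out at exactly $128$. So your framework is the right one, but you must let the equal-slope cases stand rather than try to eliminate them, and the proof is complete only once the explicit volume computations and these three rank-by-rank bounds are actually carried out.
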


\begin{proof}
Let $\Delta$ be the fan of $X$.  Then $\Delta(1) = \{ \alpha_1=(1,0,0,0), \alpha_2=(0,1,0,0), \alpha_3=(0,0,1,0), \alpha_4=(-1,-1,-1,0), \alpha_5=(0,0,0,1), \alpha_6=(1,0,0,-1) \} $ (see \cite{Bat2}).\\
We have ${\rm Vol}(P_{\alpha_{1}}) = 8, {\rm Vol}(P_{\alpha_{2}}) = \frac{56}{3}, {\rm Vol}(P_{\alpha_{3}}) = \frac{56}{3}, {\rm Vol}(P_{\alpha_{4}}) = \frac{56}{3}, {\rm Vol}(P_{\alpha_{5}}) = \frac{32}{3}$ and ${\rm Vol}(P_{\alpha_{6}}) = \frac{32}{3}$ with respect to anti-canonical divisor. Then\\
$$ \mu(TX) = \frac{3!}{4} \sum_{i=1}^6 {\rm Vol}(P_{\alpha_i} ) = 128 $$

We denote the $4$-dimensional cones by $\sigma_i$ and we have 8 of them. They are	
\begin{multicols}{2}	
\begin{eqnarray*}
\sigma_1 &=& <\alpha_{2}, \alpha_{3}, \alpha_{4}, \alpha_{6}>\\
\sigma_2 &=& <\alpha_{1}, \alpha_{2}, \alpha_{3}, \alpha_{5}>\\
\sigma_3 &=& <\alpha_{1}, \alpha_{2}, \alpha_{3}, \alpha_{6}>\\	
\sigma_4 &=& <\alpha_{1}, \alpha_{2}, \alpha_{4}, \alpha_{5}>
\end{eqnarray*}

\begin{eqnarray*}
\sigma_5 &=& <\alpha_{1}, \alpha_{2}, \alpha_{4}, \alpha_{6}>\\
\sigma_6 &=& <\alpha_{1}, \alpha_{3}, \alpha_{4}, \alpha_{5}>\\
\sigma_7 &=& <\alpha_{1}, \alpha_{3}, \alpha_{4}, \alpha_{6}>\\	
\sigma_8 &=& <\alpha_{2}, \alpha_{3}, \alpha_{4}, \alpha_{5}>
\end{eqnarray*}
\end{multicols}

Denote
$$t_1= \chi((1,0,0,0)) , \quad t_2 = \chi((0,1,0,0)), \quad t_3 = \chi((0,0,1,0)), \quad t_4 = \chi((0,0,0,1)) \,.$$

First consider rank 1 equivariant subsheaves of $TX$.  There is a possible such subsheaf $\mc{F}$  associated to the 
vector $\vec{\lambda} = ( 0,0,0,0,-1,-1)$.

The other choices for $\vec{\lambda}$ that satisfy the conditions of Lemma \ref{collection} have at most one $-1$ entry. Then it is easy to check
using Lemma \ref{compare} that  $\mc{F}$ has the possible maximum slope among rank 1 subsheaves. The slope of $\mc{F}$ is
$$ \mu(\mc{F}) \,=\, \frac{3!}{1} ({\rm Vol}(P_{\alpha_5} )+ {\rm Vol}(P_{\alpha_6} )) = 128 \,.$$
As this equals  $\mu(TX)$, we need to check the existence of $\mc{F}$.

First consider the $4$--dimensional cones containing $\alpha_5$, these are $\sigma_2, \sigma_4, \sigma_6, \sigma_8$. Define $$x_i := \chi(m^{\sigma_2}_i), 
\quad y_i := \chi(m^{\sigma_4}_i), \quad z_i := \chi(m^{\sigma_6}_i)  \quad {\rm and}  \quad w_i := \chi(m^{\sigma_8}_i)$$
 where $1\leq i \leq 4$.

Then
\begin{equation}\label{cc1}
\begin{array}{llll}
x_1= t_1     & x_2= t_2    & x_3= t_3    &    x_4= t_4 \,, \\
y_1= t_1 t^{-1}_3  &     y_2= t_2 t^{-1}_3   &  y_3= t^{-1}_3    &    y_4= t_4 \,,\\
z_1= t_1 t^{-1}_2 & z_2= t^{-1}_2 t_3  &  z_3= t^{-1}_2  & z_4= t_4 \,, \\
w_1= t^{-1}_1 t_2   &  w_2= t^{-1}_1 t_3  & w_3= t^{-1}_1 &  w_4= t_4 \,. 
\end{array}
\end{equation}

As $\lambda_5 = -1$, there is an element generated by (meaning, a multiple of) $\partial_{x_4}$ in $F^{\sigma_2}$. Similarly, there are elements 
generated by $\partial_{y_4}$, $\partial_{z_4}$ and $\partial_{w_4}$ in $F^{\sigma_4}$, $F^{\sigma_6}$ and $F^{\sigma_8}$, respectively. But, using 
the Jacobians of the transformations between the $t$ and other coordinates, we have 
$$\partial_{x_4}=\partial_{y_4}=\partial_{z_4}=\partial_{w_4}=\partial_{t_4} \,.$$ Thus the generators agree on the dense torus.

Secondly, consider cones containing $\alpha_6$ as a generator, which are $\sigma_1, \sigma_2, \sigma_5, \sigma_7.$ Call now $p_i := \chi(m^{\sigma_1}_i)$, $q_i := \chi(m^{\sigma_3}_i)$, $r_i := \chi(m^{\sigma_5}_i)$ and $s_i := \chi(m^{\sigma_7}_i)$ where $1\le i \le 4$. 
Similar to above computations, find that 
\begin{equation}\label{cc2}
\begin{array}{llll} 
p_1= t^{-1}_1 t_2 t^{-1}_4 & p_2= t^{-1}_1 t_3 t^{-1}_4 & p_3= t^{-1}_1 t^{-1}_4 & p_4= t^{-1}_4 \,,\\
q_1= t_1 t_4                 &   q_2= t_2                  &    q_3= t_3 &      q_4= t^{-1}_4 \,, \\
r_1= t_1 t^{-1}_3 t_4    & r_2= t_2 t^{-1}_3     & r_3= t^{-1}_3    & r_4= t^{-1}_4 \,, \\
s_1= t_1 t^{-1}_2 t_4   & s_2= t^{-1}_2 t_3     & s_3= t^{-1}_2    & s_4= t^{-1}_4 \,.
\end{array}
\end{equation}

As $\lambda_6 = -1$, there are elements generated by $\partial_{p_4}$, $\partial_{q_4}$, $\partial_{r_4}$ and $\partial_{s_4}$ in in 
$F^{\sigma_1}$, $F^{\sigma_3}$, $F^{\sigma_5}$ and $F^{\sigma_7}$ respectively.  Using various Jacobians, we have 
$$\partial_{p_4}=\partial_{q_4}=\partial_{r_4}=\partial_{s_4}= t_1 t_4 \partial_{t_1} - t^{2}_4 \partial_{t_4}\,.$$

However, on the dense torus, the generators $\partial_{t_4}$ and $ t_1 t_4 \partial_{t_1} - t^{2}_4 \partial_{t_4}$ are linearly independent. 
Hence the rank of $\mc{F}$ must be at least $2$, leading to a contradiction. We conclude that $\mu({\mc F}) < 128 = \mu(TX)$ for every 
rank $1$ equivariant subsheaf of $TX$. 

Next we consider rank $2$ equivariant subsheaves of $TX$ with maximum possible slope. By condition $(5)$ of Lemma \ref{matrix}
a subsheaf $\mc{F}$ with $\lambda_{1j} = -1$ for $3$ values of $j$ is only possible when $2$ of the $j$'s are $5$ and $6$. Then, 
using condition $(6)$ of Lemma \ref{matrix}, the slope of 
$\mc{F}$ has the following bound,
$$ \mu(\mc{F}) \leq \frac{3!}{2}({\rm Vol}(P_{\alpha_5})  +{\rm Vol}(P_{\alpha_6})  + {\rm max}_{1\leq j \leq 4} {\rm Vol}(P_{\alpha_j} ) ) = 120  \,. $$
Thus there is no destabilizing subsheaf of rank $2$. 

There is a rank $3$  equivariant subsheaf $\mc{F}$ of $TX$ with associated matrix $\Lambda_{3\times 6 } = (\lambda_{ij})$ such 
that $\lambda_{1j} = -1$ for $1\leq j  \leq 4$, and all other $\lambda_{ij} = 0$. For every $4$-dimensional cone $\sigma_l$, $F^{\sigma_l}$ is generated over $k[S_{\sigma_l}]$ by $\partial_{\chi(m_i^{\sigma_l})}$, $1 \leq i \leq 3$.
The generators for different cones $\sigma_l$ and $\sigma_m$ are related by Jacobians of corresponding transition maps which are naturally
regular on $X_{\sigma_l\cap \sigma_m} $. Moreover, as $\chi(m_4^{\sigma_l})$ is either $t_4$ or $t_4^{-1}$, it follows easily that 
$\partial_{\chi(m_i^{\sigma_l})}$, $1 \leq i \leq 3$, is always a combination of $\partial_{t_1}$,  $\partial_{t_2}$ and $\partial_{t_3}$. 
Thus the subsheaf $\mc{F}$ indeed exists.
 It has the  maximum slope among rank 3 subsheaves, and the slope is
$$ \mu(\mc{F}) = \frac{3!}{3} ({\rm Vol}(P_{\alpha_1} )+ {\rm Vol}(P_{\alpha_2} )+{\rm Vol}(P_{\alpha_3} )+{\rm Vol}(P_{\alpha_4} )) = 128 =
 \mu(TX) \,.$$
This concludes the proof.
\end{proof}	

Here is the classification of all nonsingular Fano toric $4$-folds with Picard number $\leq 2$ according to stability with respect to the anticanonical polarization. 

\begin{theorem}\label{fano4}
	Suppose $X$ is one of the Fano toric $4$--folds with Picard number $\leq2$. Then
	\begin{enumerate}
		\item  $T\mathbb{P}^{4}$ is stable,
		\item  $TB_4$ and $TC_4$ are polystable,
		\item $TB_5$ is strictly semistable,
		\item   $TB_1$, $TB_2$ and $TB_3$ are unstable,
		\item $TC_1$, $TC_2$ and $TC_3$ are unstable.
	\end{enumerate}
\end{theorem}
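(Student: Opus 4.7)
The plan is to dispatch each of the ten varieties to the appropriate tool already developed. For $\mathbb{P}^{4}$, stability is immediate from Theorem \ref{projective spaces}. For the three varieties $B_{1}, B_{2}, B_{3}$, these are exactly the cases $(n,m)=(4,3),(4,2),(4,1)$ of Theorem \ref{family}, so their tangent bundles are unstable. The semistability of $TB_{5}$ is the content of Theorem \ref{detailed example of 4-fold}. This leaves the product cases $B_{4}=\mathbb{P}^{1}\times\mathbb{P}^{3}$, $C_{4}=\mathbb{P}^{2}\times\mathbb{P}^{2}$ and the three $\mathbb{P}^{2}$-bundles $C_{1}, C_{2}, C_{3}$ over $\mathbb{P}^{2}$.

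For the product varieties $B_{4}$ and $C_{4}$, the tangent bundle splits as $\pi_{1}^{*}TY_{1}\oplus\pi_{2}^{*}TY_{2}$ where $Y_{1},Y_{2}$ are the projective space factors. The plan is first to compute, via the intersection formula $\deg\mc{E} = c_{1}(\mc{E})\cdot H^{3}$ with $H=-K$ of the product, that the two pulled-back summands share the same slope as $TX$, which combined with Theorem \ref{projective spaces} gives polystability. To exclude a proper equivariant subsheaf of strictly greater slope, one invokes Lemma \ref{matrix}: any candidate associated matrix $\Lambda$ would have to concentrate its $-1$ entries on rays whose facets already achieve the maximum polytope-volume contribution, and in the product-of-fans combinatorics no such $\Lambda$ is compatible with Lemma \ref{rank}.

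For $C_{1}=\mathbb{P}(\mc{O}_{\mathbb{P}^{2}}^{\,2}\oplus\mc{O}_{\mathbb{P}^{2}}(2))$, $C_{2}=\mathbb{P}(\mc{O}_{\mathbb{P}^{2}}^{\,2}\oplus\mc{O}_{\mathbb{P}^{2}}(1))$ and $C_{3}=\mathbb{P}(\mc{O}_{\mathbb{P}^{2}}\oplus\mc{O}_{\mathbb{P}^{2}}(1)^{\oplus 2})$, I will follow the same strategy as the proof of Theorem \ref{family}. Each is a $\mathbb{P}^{2}$-bundle whose fan in $\mathbb{R}^{4}$ has six rays, three for the base $\mathbb{P}^{2}$ and three for the fiber $\mathbb{P}^{2}$, with a twist encoded by the splitting type. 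For each variety I would (i) write down the six rays and the anti-canonical polytope $P$, (ii) compute the facet volumes ${\rm Vol}(P_{\alpha_{j}})$ and hence $\mu(TX)$ from \eqref{degtx}, and (iii) exhibit a small-rank equivariant subsheaf $\mc{F}$ with an associated vector $\vec\lambda$ (as in Lemma \ref{collection}) or matrix $\Lambda$ (as in Lemma \ref{matrix}) placing the $-1$'s on rays whose facets have the largest volumes. The natural candidate by analogy with Theorem \ref{family} is rank $1$ with $\vec\lambda$ concentrating both $-1$ entries on the two fiber rays that appear in $\alpha_{n+1}=-\alpha_{n-1}-\alpha_{n}+k(\text{twist})$; its existence is checked by writing the coordinate charts on each of the six maximal cones, computing the Jacobians between them, and verifying that the local generators patch to a single rank-$1$ subsheaf.

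The main obstacle will be the $C$-family. Each of $C_{1},C_{2},C_{3}$ requires its own fan calculation and, crucially, its own transition-map verification: one must check on every overlap $X_{\sigma}\cap X_{\tau}$ that the chosen vector fields $\partial_{\chi(m^{\sigma}_{i})}$ on $X_{\sigma}$ correspond under the relevant Jacobian to the chosen ones on $X_{\tau}$, exactly as was done cone-by-cone in the $B_{5}$ argument (compare \eqref{cc1} and \eqref{cc2}). A secondary difficulty is to verify rigorously that the candidate subsheaves found for $C_{1},C_{2},C_{3}$ strictly exceed $\mu(TX)$, rather than just equal it, which will distinguish unstable from merely semistable; this comes down to an arithmetic inequality in $(n+m)^{n-2}\pm (n-m)^{n-2}$ type sums, as in the last step of Theorem \ref{family}, specialized to $n=3$ or $n-1=2$ depending on the variety.
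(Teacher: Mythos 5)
Parts (1), (3) and (4) of your plan coincide exactly with the paper's proof: $\mathbb{P}^4$ via Theorem \ref{projective spaces}, $B_5$ via Theorem \ref{detailed example of 4-fold}, and $B_1,B_2,B_3$ as the cases $(n,m)=(4,3),(4,2),(4,1)$ of Theorem \ref{family}. For (2) the paper does not compute anything: it simply quotes Nakagawa's result that $B_4$ and $C_4$ admit K\"ahler--Einstein metrics and concludes from that. Your combinatorial route is genuinely different, but be aware of what it can deliver: your own first step shows that the two pulled-back summands of $T(\mathbb{P}^1\times\mathbb{P}^3)$ and $T(\mathbb{P}^2\times\mathbb{P}^2)$ are proper subsheaves of slope equal to $\mu(TX)$, so a decomposable bundle of this kind can only be polystable, never stable in the strict Mumford--Takemoto sense; excluding equivariant subsheaves of \emph{strictly} larger slope via Lemma \ref{matrix} yields semistability, and together with the splitting, polystability --- which is also all the K\"ahler--Einstein argument gives.

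The genuine gap is in (5). Your ``natural candidate by analogy with Theorem \ref{family}'' is a rank-one subsheaf whose vector $\vec\lambda$ carries its two $-1$ entries on two fiber rays. In Theorem \ref{family} this works because the fiber is $\mathbb{P}^1$: its two rays are opposite and never lie in a common cone. For $C_1$, $C_2$, $C_3$ the fiber is $\mathbb{P}^2$, whose three rays pairwise span two-dimensional cones of $\Delta$, so any two fiber rays \emph{do} form a cone and condition (5) of Lemma \ref{matrix} (with $r=1$) forbids the configuration $(-1,-1)$ there; equivalently, the transition computation forces two independent generators on the dense torus, exactly as in the Hirzebruch discussion and in the rank-one analysis inside the proof of Theorem \ref{detailed example of 4-fold}. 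Hence the destabilizer you propose does not exist. The paper's actual destabilizers are of rank $2$ for $C_1$ and $C_2$, and of rank $1$ with a different $\vec\lambda$ for $C_3$, each checked by the cone-by-cone Jacobian patching you describe. Relatedly, the final inequality cannot be obtained by ``specializing'' the $(n\pm m)^{n-2}$ estimate of Theorem \ref{family}: that formula pertains to $\mathbb{P}^1$-bundles $\mathbb{P}(\mc{O}_{\mathbb{P}^{n-1}}\oplus\mc{O}_{\mathbb{P}^{n-1}}(m))$ and none of $C_1,C_2,C_3$ is of that form, so you must recompute the anticanonical facet volumes of each $\mathbb{P}^2$-bundle and compare slopes from scratch.
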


\begin{proof}
	\begin{enumerate}
		\item This is well-known. See Theorem \ref{projective spaces} for an alternative proof. 
		\item $B_4$ and $C_4$ admit Einstein-K\"{a}hler metrics (see \cite[Theorem 3.4]{Nak2}).  Hence, these are  polystable.
		\item See Theorem \ref{detailed example of 4-fold}. 
		\item These are special cases of  Theorem \ref{family}.
		\item  $TC_1$ and $TC_2$ are destabilized by rank 2 subsheaves, and  $TC_3$ is destabilized by a rank 1 subsheaf. 
		These may be verified  by following a similar approach as in the proof of Theorem \ref{detailed example of 4-fold}.
	\end{enumerate}
\end{proof}

\section*{Acknowledgements}

We would like to thank the anonymous referee for many useful comments and suggestions that have helped to improve the exposition.  
The first-named author is supported by a J. C. Bose fellowship. The second-named author is supported by a SERB MATRICS research grant. The third-named author is supported by Narodowe Centrum Nauki 2018/30/E/ST1/00530. The last-named author has been supported in part by an SRP grant from METU NCC and a SERB MATRICS grant.

\end{document}